\definecolor{mains}{cmyk}{.3, .85, .75, 0}  
\definecolor{afb}{rgb}{0.03, 0.27, 0.49}
\definecolor{def}{rgb}{0.27, 0.03, 0.49}
\newcounter{FNC}[page]
\def\fauxfootnote#1{{\addtocounter{FNC}{2}$^\fnsymbol{FNC}$%
     \let\thefootnote\relax\footnotetext{$^\fnsymbol{FNC}$\Magenta{#1}}}}
\numberwithin{equation}{section}
\newtheorem{theorem}{Theorem}[section]
\newtheorem{lemma}[theorem]{Lemma}
\newtheorem{thm}[theorem]{Theorem}
\newtheorem{defn}[theorem]{Definition}
\newtheorem{exm}[theorem]{Example}
\newtheorem{rem}[theorem]{Remark}
\newenvironment{definition}[1][]{\rm\begin{defn}[#1]\rm}{\end{defn}}
\newenvironment{remark}[1][]{\rm\begin{rem}[#1]\rm}{\end{rem}}
\author{Stefan Forcey} \address[S. Forcey]{
    Department of Mathematics\\
    The University of Akron\\
    Akron, OH 44325-4002
    }
    \email{sforcey@uakron.edu}  \urladdr{http://www.math.uakron.edu/\~{}sf34/}
\author{Drew Scalzo} \address[D. Scalzo]{
    Department of Mathematics\\
    The University of Akron\\
    Akron, OH 44325-4002
    }
\title[Phylogenetic resistance distance]{Phylogenetic networks as circuits with resistance distance}
\keywords{phylogenetics, polytope, neighbor joining, facets}
\subjclass[2000]{90C05, 52B11, 92D15}
\begin{document}

\begin{abstract}
    
    Phylogenetic networks are notoriously difficult to reconstruct. Here we suggest that it can be useful to view unknown genetic distance along edges in phylogenetic networks as analogous to unknown resistance in electric circuits.  This \emph{resistance distance}, well known in graph theory, turns out to have nice mathematical properties which allow the precise reconstruction of networks. Specifically we show that the resistance distance for a weighted 1-nested network is Kalmanson, and that the unique associated circular split network fully represents the splits of the original phylogenetic network (or circuit). In fact, this full representation corresponds to a face of the balanced minimal evolution polytope for level-1 networks. Thus the unweighted class of the original network can be reconstructed by either the greedy algorithm neighbor-net or by linear programming over a balanced minimal evolution polytope. We begin study of 2-nested networks with both minimum path and resistance distance, and include some counting results for 2-nested networks.
    

 \end{abstract}
\keywords{polytopes, phylogenetics, trees, metric spaces}
\maketitle



\section{Introduction}

Consider an electrical circuit: a network made of wires joining resistors in parallel and in sequence, with some portion hidden inside an opaque box. It is not always possible to determine that portion by testing the visible leads. However, we prove here that if the hidden portion has a particular form made of  connected cycles, and we can test the resistance between all the pairs of leads, then the lengths and connected structure of the cycles in the circuit are uniquely determined. The mathematics used to recover that circuit is more typically found in work on phylogenetic networks.

 Modeling heredity as the flow of genetic information suggests that mutations in DNA might be analogous to resistance in an electrical circuit.
The weights of edges in a phylogenetic network can represent genetic distances: if we have the genomes of the two endpoints of an edge then we can use a model of mutation rates to calculate a real number distance. For several edges that form the unique path between two taxon-labeled leaves, the total distance is the sum of those edge weights. Paths between leaves are only unique if the network is a tree. 
When paths between are not unique, one option is to take the distance to be that of the minimum length path. This option may correspond to a parsimonious approach---assuming the least complicated history. This minimum path length distance is studied for instance in \cite{scalzo}. 

\begin{figure}
    \centering
    \includegraphics[width=\textwidth]{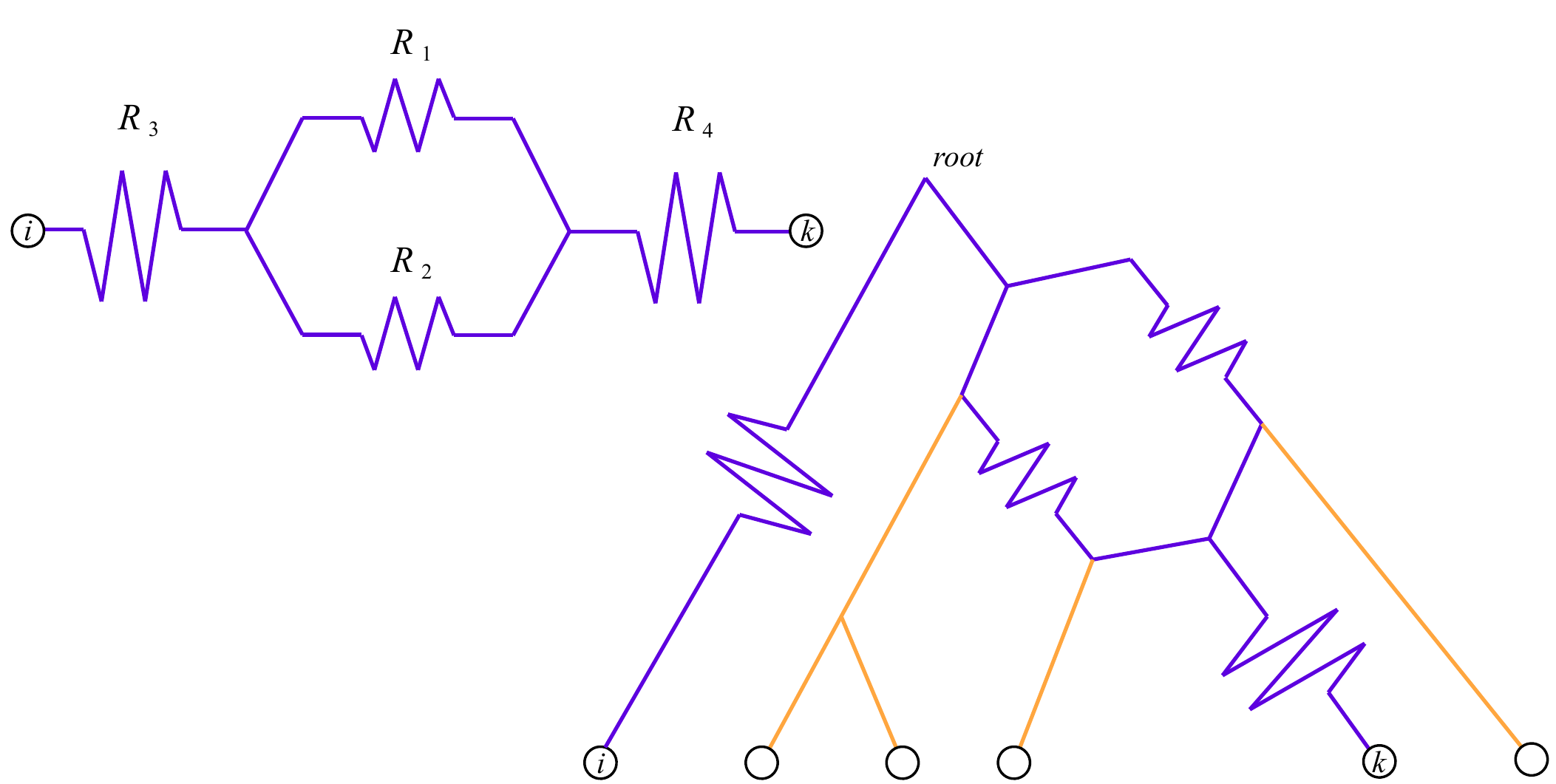}
    \caption{The total resistance from $i$ to $k$ is $\displaystyle{R_{i,k} = \frac{R_1R_2}{R_1+R_2}+R_3+R_4}$. On the left is the circuit itself, on the right we see it as a pairwise circuit within a phylogenetic network. Here we have chosen $i$ to be an outgroup, so the network is rooted at the top and the downward direction is forward in time.}
    \label{circuit1}
\end{figure}

Instead, however, a greater weight of an edge could represent a greater loss of information. Dividing and rejoining of edges illustrates events such as speciation, recombination, or hybridization. If the genetic information of an ancestor genome can be shared among descendants, and then collaboratively recovered upon hybridization, then a different metric than minimum distance may be appropriate.    
Here we consider weighted phylogenetic networks with the \textcolor{def}{\emph{resistance distance}}, or \textcolor{def}{\emph{resistance metric.}} The distance between two leaves of the network is found by considering the edge weights as  electrical resistance, obeying Ohm's law. The metric resistance distance for all nodes (not only leaves) of a graph is introduced in \cite{klein93}, and studied closely in subsequent papers such as \cite{klein15} and \cite{klein19}. To study graphs, the resistance of each edge is often assumed to have unit value, but the definitions allow any weight. We review the definitions in Section~\ref{sec_weight}. 

In \cite{curtis2}, \cite{curtis0} and \cite{curtis1},  the authors study circular planar graphs with boundary nodes that are analogous to the leaves of our phylogenetic networks. They consider resistance values (or conductivity) on the edges. They prove that complete information about the linear map which transforms electric current values at each boundary node to electric current values at all the edges can be used in some cases to recover the resistance values. In our applications there is no way to know the complete map of boundary currents to edge currents. However, we seek only to recover the graphical structure of the network, not the original edge weights.  

In \cite{filar}, the authors consider the entire set of resistance distances (again using unit values for edges), between any pair of nodes (not only leaves.) They show that using this metric is useful for discovering Hamiltonian cycles via algorithms for the Travelling Salesman problem. There is a close connection to our applications, since the algorithm neighbor-net can be used as a greedy approach to the Travelling Salesman problem as shown in \cite{Pachter2}.

\subsection{Main Results and Overview}
In Section~\ref{defs} we start by reviewing Ohm's law and resistance distance. Then we review the relevant definitions of mathematical phylogenetics, many taken from other sources to help make this paper self-contained.  In Section~\ref{thms} we state and prove the main results for 1-nested phylogenetic networks $N$. The upshot is that when the distances between taxa are effective resistances based on unknown connections, then using well known methods  we can recover an unweighted circular split network, which gives us the precise class of (unweighted) 1-nested phylogenetic network. Specifically, this recovery is via the (greedy) algorithm neighbor-net as decribed in Theorem~\ref{net} or  linear programming; see Theorem~\ref{face}. 

Several features of the resistance distance seem exactly suited to phylogenetic networks with weighted edges. First, from Theorem~\ref{kal}, the resistance distance of 1-nested phylogenetic networks is Kalmanson, allowing the circular split network to be uniquely reconstructed from the measured distances. Second, from Theorem~\ref{tsplits}, that reconstructed circular split network always displays precisely the same  splits as the original network. As a consequence, the trivial splits which are the traditional final edges to the leaves of a phylogenetic network are automatically guaranteed to be represented in the split network---this is a condition beyond the basic Kalmanson condition. Finally, triangular subgraphs are interchangeable with three-edge stars when measuring resistance distance. This is known as the Y-$\Delta$ transform, pictured in Figure~\ref{wye}. The Y-$\Delta$ equivalence mirrors the fact that triangles in a phylogenetic network, when attached via bridges to the rest of the network, are indistinguishable from degree-three tree-like vertices by the linear functionals used for balanced minimal evolution. As well, the split networks are bipartite, so triangle free.

In Section~\ref{sec_poly} we review the balanced minimal evolution polytopes, and show how our results can be interpreted geometrically, in Theorems~\ref{vert} and~\ref{face}. In Section~\ref{counter} we point out some interesting counterexamples and limiting cases, and conjecture about how to extend our results to more complicated networks. Section~\ref{too} contains some new results on 2-nested networks with regard to the minimum path distance. Finally in Section~\ref{jukes} we consider qualifications of experimental distance measurements in phylogenetics that would give justification for assuming the resistance analogy to be valid in practice.

\section{Definitions and cited results}\label{defs}
We start by reviewing some equations from electric circuit theory.
\subsection{Electricity} Given a conductive circuit with a power supply, the materials have resistance $R$ and the power causes a current $I.$ 
The classic  Ohm and Kirchoff equations include: $R = V/I$ and $I = I_1+I_2.$ The first depends on the conductive material---it must be experimentally verified. It relates the resistance in a circuit to the constant voltage drop over the circuit and the constant current in all of the circuit.  The second states that total current must equal the sum of  circuit-parallel portions of that current after a branching in the circuit.  Together, these rules imply the law for total resistance $R_T$ for a pair of circuit-parallel resistances $R_1,R_2.$ We have $R_T = R_1R_2/(R_1+R_2),$ which we refer to as Ohm's law for parallel resistance. Also, the voltage drop over a closed circuit must equal the total voltage: this implies that resistors in series are summed to find the total resistance. We illustrate the basic calculation of a total resistance in Figure~\ref{circuit1}. We illustrate the implied $Y$-$\Delta$ equivalence in Figure~\ref{wye}.
\begin{figure}
    \centering
    \includegraphics[width=5in]{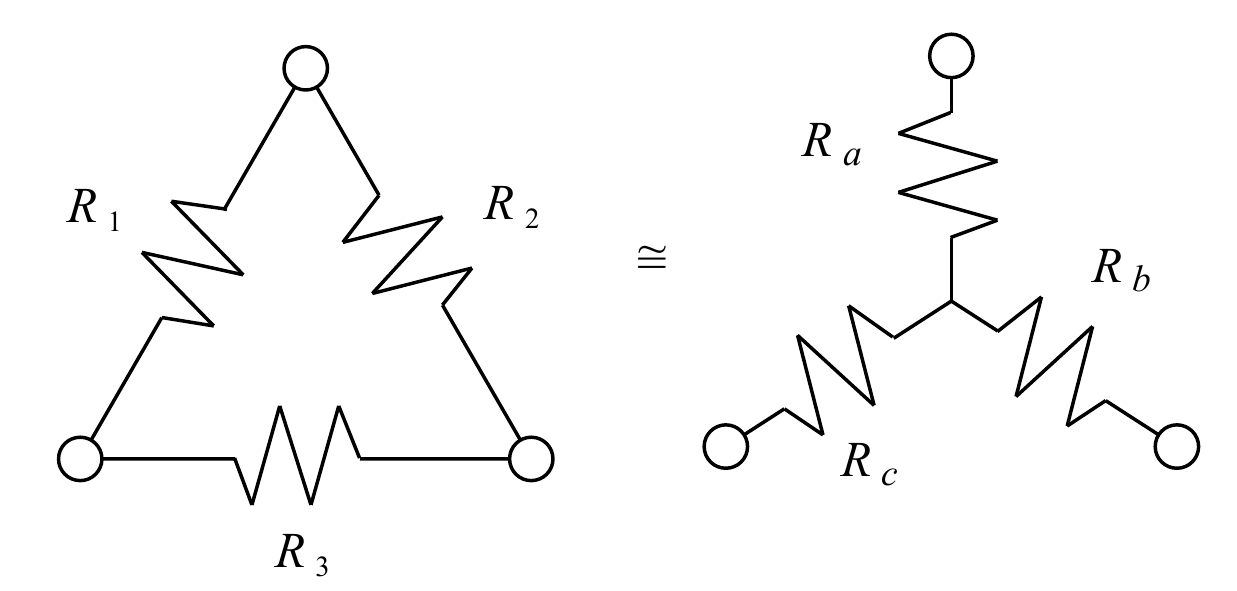} 
    \caption{The two networks shown here have identical resistance between any two corresponding pairs of nodes at the three corners. Here $\displaystyle{R_a = \frac{R_1R_2}{R_1+R_2+R_3}, R_b=\frac{R_2R_3}{R_1+R_2+R_3}}$, and $\displaystyle{R_c = \frac{R_1R_3}{R_1+R_2+R_3}.}$}
    \label{wye}
\end{figure}

\subsection{Phylogenetic definitions}
Many of the definitions and notes here are repeated (sometimes verbatim) from \cite{scalzo} for the sake of self-containment. For further reference, see \cite{steelphyl} and \cite{gambette-huber}.

A \textcolor{def}{\emph{split}} $A|B$ is a bipartition of $[n] = \{1,\dots,n\}.$ That is, $A$ and $B$ are non-empty disjoint subsets whose union is $[n].$  The two parts of a split are often called \textcolor{def}{\emph{clades}}. If one clade of a split has only a single element, we call that split \textcolor{def}{\emph{trivial.}} A \textcolor{def}{\emph{split system}}  is a set $s$ of splits of $[n]$ which contains all the trivial splits. We say a split system $s$  \textcolor{def}{\emph{refines}} another split system $s'$ when $s \supset s'$.
In this paper all graphs are simple (no multi-edges) and connected.
\begin{definition}
An  \textcolor{def}{\emph{(unrooted) phylogenetic network}} on $[n]$ is a simple connected graph with:
\begin{enumerate}
    \item[i.] Labeled leaves: $n$ degree-1 vertices,  labeled bijectively with the elements of $[n]$,
    \item[ii.] Unlabeled nodes: all these must have degree larger than 2.
\end{enumerate}
For the remainder of the paper, all phylogenetic networks are assumed to be unrooted and without any edge directions.
\end{definition} 

A split $A|B$ is \textcolor{def}{\emph{displayed}} by a phylogenetic network $N$ when there is (at least) one subset of edges of $N$ whose deletion (keeping all nodes) results in two connected components  with $A$ and $B$ their respective sets of labeled leaves. We call that collection of edges a \textcolor{def}{\emph{minimal cut}} displaying the split when the collection contains no proper subset displaying the same split.  A \textcolor{def}{\emph{bridge}} is a single edge which displays a split. A \textcolor{def}{\emph{trivial bridge}} displays a trivial split. A \textcolor{def}{\emph {phylogenetic tree}} is a cycle-free phylogenetic network, so every edge is a bridge. 
Figure~\ref{splits} shows examples of splits displayed, for the trees and their two generalizations described here: phylogenetic networks and split networks. 
\begin{figure}
    \centering
   \includegraphics[width=\textwidth]{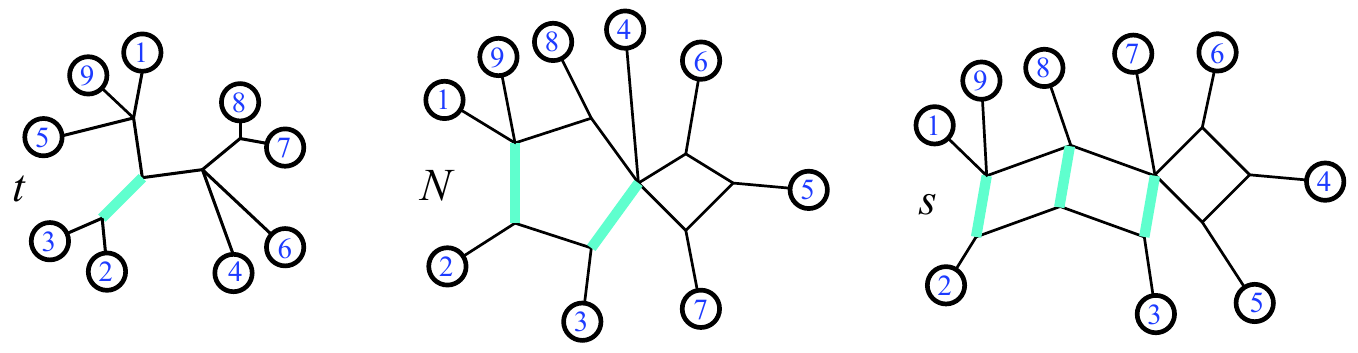}
    \caption{Modified from a figure in \cite{scalzo}. In a phylogenetic tree $t$, on the left, splits are always single edges. The highlighted edge is the split $\{2,3\}|\{1,4,5,6,7,8,9\}.$ That same split is a pair of edges making a minimal cut in the 1-nested phylogenetic network $N$, center. Finally on the right, that same split is a set of parallel edges in a circular split network $s$.}
    \label{splits}
\end{figure}
 Recall that a cycle in a graph is a path of edges that does not revisit any nodes except for the node at which it starts and ends. The following
 is defined in \cite{gambette-huber}: 
 \begin{definition}
 An unrooted phylogenetic network $N$ is called \textcolor{def}{\emph{1-nested}}  when each edge of $N$ is contained in at most one cycle, and $N$ is triangle-free---all cycles are of length greater than 3 edges. 
 \end{definition}

A 1-nested phylogenetic network can be drawn in the plane with its leaves on the exterior, which is referred to as outer planarity.  We consider two 1-nested networks to be \textcolor{def}{\emph{split-equivalent}} if they display the same set of splits.  See Figure~\ref{fig:mo_examplo} for examples.  \textcolor{def}{\emph{Twisting}} a phylogenetic network around a bridge (reflecting one side through the line of the bridge), or around a cut-point node, does not change the list of splits.  Any cyclic order of the leaves seen around the exterior in some representative drawing of a 1-nested phylogenetic network is said to be \textcolor{def}{\emph{consistent}} with that split system. Figure~\ref{processo} shows examples. 
\begin{figure}
    \centering
    \includegraphics[width=\textwidth]{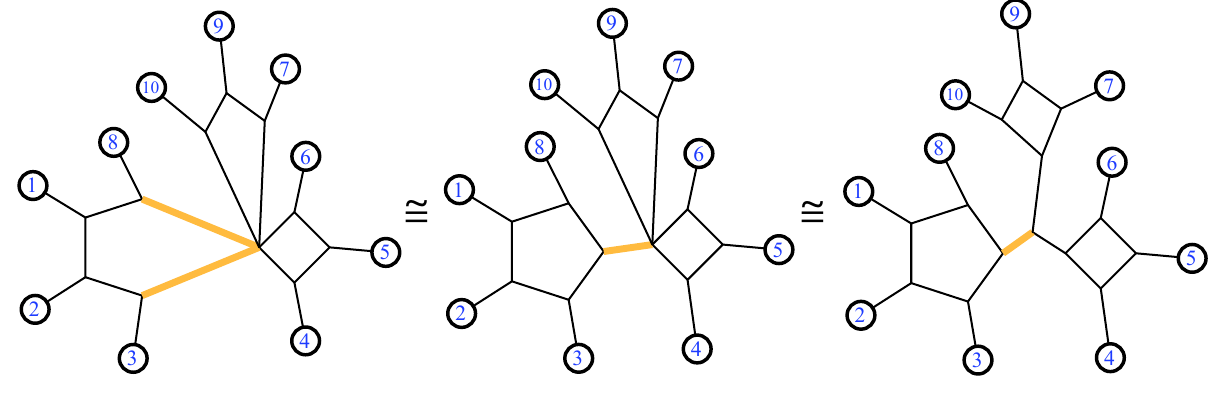}
    \caption{A trio of equivalent 1-nested phylogenetic networks. All display the same set of splits. The highlighted edges display the same split in each network.}
    \label{fig:mo_examplo}
\end{figure}
 A \textcolor{def}{\emph{binary}} phylogenetic network is one in which the unlabeled nodes each have degree 3. A phylogenetic network $N$ refines another, written $N\ge M$, when the splits displayed by $M$ are a subset of those displayed by $N.$ Several of these terms are exhibited in Figure~\ref{processo}.
Next we review the definition of another generalization of a phylogenetic tree.
\begin{definition}
A  \textcolor{def}{\emph{split network}} displaying a split system $s$ on $[n]$ is an embedding in Euclidean space of a simple connected graph, also called $s$, with the following:
\begin{enumerate}\item[i.] Labeled leaves: $n$ degree 1 nodes are bijectively labeled by $[n]$.
\item [ii.] Unlabeled nodes: these have degree larger than 1.
\item[iii.] A partition of the set of edges:  the parts of this partition are called \emph{split-classes}. There is one split-class for each split $A|B$ in the system. It is required that for any two leaves, the set of edges on a shortest path between them intersects each split-class in at most one edge, and that the set of splits thus traversed is the same for any shortest path between those two leaves.
\item[iv.] The split-class of edges corresponding to a split $A|B$ comprises a minimal cut displaying that split:  deletion of those edges  results in two connected components with respective labeled leaves $A$ and $B$. 
\end{enumerate}
\end{definition}
The resulting bipartite graph is often shown with each class of edges  embedded as a set of equal length parallel line segments. (Note: here parallel means geometrically parallel.) Alternate definitions use colors;  the edges in a split-class are colored alike, as in, \cite{basic}, \cite{steelphyl}. A split-class of size one is a bridge. Two split networks are defined to be equivalent when they represent the same split system. 

\begin{definition}
A \textcolor{def}{\emph{circular split system}} is a split system which allows the embedding of a representative split network in the plane, with the labeled nodes all on the exterior, and thus arranged in a circular order. We refer to these representatives as circular split networks.
\end{definition}
  Just as for phylogenetic networks, \textcolor{def}{\emph{twisting}} a circular split network around a bridge (reflecting one side through the line of the bridge), or around a cut-point node, does not change the list of splits.  Any cyclic order of the leaves seen  in some representative circular split network is said to be \textcolor{def}{\emph{consistent}} with that split system. 
 Two circular split networks are equivalent if they display the same split system. For instance see Figure~\ref{fig:mo_examplo_circ}. 
\begin{figure}
    \centering
    \includegraphics[width=6in]{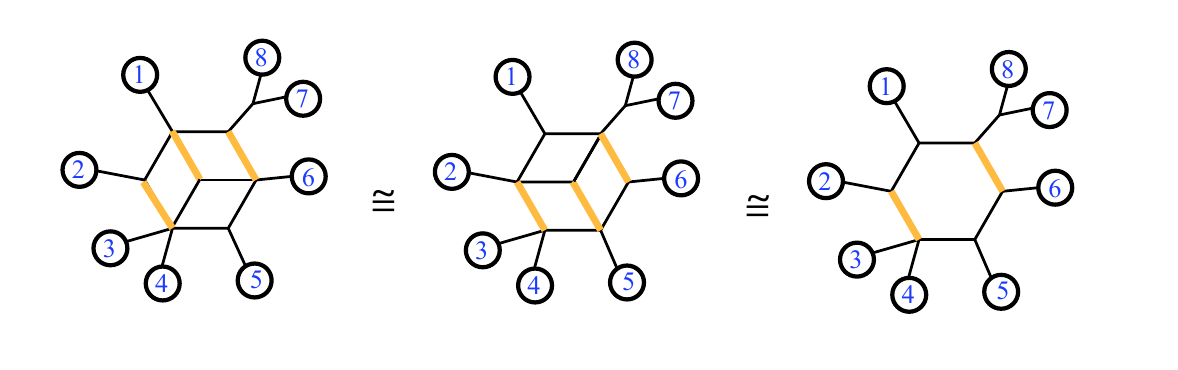}
    \caption{Modified from a figure in \cite{scalzo}. A trio of equivalent split networks. All three represent the same split system. The highlighted edges display the same split in each network. The third is the invariant exterior subgraph of all three.}
    \label{fig:mo_examplo_circ}
\end{figure}
The following lemma is from \cite {scalzo}, included here without proof for the terminology that will be useful in the next section.
\begin{lemma}
Given a circular split network $s$,
the nodes and edges adjacent to the exterior of the graph are a subgraph which is invariant: that is, this \textcolor{def}{\emph{exterior subgraph}} will be identical to the exterior subgraph of any circular split network representing the same set of splits as $s$. 
\end{lemma}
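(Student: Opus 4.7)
The plan is to reduce the lemma to an invariance statement about planar embeddings, namely that any two circular split networks representing the same split system differ only by a sequence of twists around bridges and cut nodes, and that each such twist leaves the boundary subgraph of the outer face unchanged.

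First I would observe that the abstract graph underlying a circular split network is determined by the split system $s$: the partition of the edge set into split-classes is forced by the minimal-cut requirement, and the adjacency structure is forced by the condition that shortest paths between leaves cross each split-class in at most one edge. Consequently, two equivalent circular split networks share the same abstract graph; they can differ only in their planar embeddings (with leaves required to sit on the outer face). So the lemma reduces to showing that the outer-face subgraph is the same for any two such embeddings.

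Next I would invoke a Whitney-type uniqueness statement for planar embeddings: embeddings of a graph with a prescribed outer face are unique up to reflections of 2-connected blocks and re-permutations of branches at cut vertices. For circular split networks these are precisely the twists around bridges and cut nodes already introduced in the text. Since each twist is a geometric reflection of one side of the chosen bridge or cut node, it does not alter which nodes and edges touch the outer face; the boundary walk traverses the same nodes and edges, only in reversed order on the reflected portion. Induction on the number of twists then yields identity of exterior subgraphs.

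The main obstacle is the Whitney step: justifying that, within the class of circular split networks, the only degrees of freedom in the embedding are the twists the paper has already defined. This is mostly a block-cut-tree bookkeeping exercise, but it must handle bridges, cut vertices, and larger 2-connected blocks uniformly. A fallback approach, if one wants to avoid the embedding-uniqueness theorem, is to characterize the exterior subgraph intrinsically: for each pair of cyclically consecutive leaves $i$ and $j$, the exterior path from $i$ to $j$ consists of one edge from each split-class separating $i$ from $j$, glued together according to the nesting order of those splits. One would then check that the resulting graph depends only on $s$ and not on the particular consistent cyclic order — exactly because different consistent orders differ by twists, which permute the linear order of edges on a path but not the underlying edge set or its incidences.
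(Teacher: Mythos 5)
First, a point of reference: the paper does not prove this lemma at all; it is quoted from \cite{scalzo} explicitly ``without proof,'' so your argument can only be judged on its own merits.

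There is a genuine gap at the very first step. You claim that the abstract graph underlying a circular split network is determined by the split system, so that two equivalent representatives ``can differ only in their planar embeddings.'' That is false, and the paper's own Figure~\ref{fig:mo_examplo_circ} is a counterexample: the three equivalent networks shown there have visibly different interiors, and the third is \emph{only} the exterior subgraph of the other two, hence a strictly smaller graph. What varies between representatives is not the embedding but the interior routing of the split-classes: the number of edges in the class displaying $A|B$ (equivalently, the number of parallelograms used to realize the crossings of $A|B$ with other splits) is not determined by $s$, and neither are the interior nodes. The minimal-cut and shortest-path conditions constrain each representative internally but do not pin down a unique abstract graph. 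Consequently the reduction to a Whitney-type uniqueness theorem for embeddings, and the induction on twists, addresses the wrong degrees of freedom: twists are indeed the moves that relate different \emph{drawings}, but the lemma's content is precisely that the exterior is stable under the much larger class of changes to the interior combinatorics.

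Your ``fallback'' sketch is actually the viable route, and it should be promoted to the main argument: characterize the exterior subgraph intrinsically from $s$. For each pair of cyclically consecutive leaves $i,j$ in a consistent circular order, the boundary walk from $i$ to $j$ is a shortest path, so it crosses exactly one edge from each split-class separating $i$ from $j$ and no others; the order in which those classes are crossed is forced by the nesting of the corresponding splits restricted to that gap, so the walk is determined by $s$ up to the identification of endpoints of consecutive edges, which is likewise forced. To finish you must also show that the consistent circular order is itself essentially unique (up to the bridge and cut-point twists, which permute the order without changing which nodes and edges lie on the boundary); without that step, different consistent orders could a priori yield different exterior subgraphs. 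As written, both halves of the fallback are asserted rather than proved, so the proposal does not yet constitute a proof.
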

Again for example see Figure~\ref{fig:mo_examplo_circ}.  
Introduced in \cite{scalzo} is a subclass of circular split networks. 
\begin{definition}\cite{scalzo}
An \textcolor{def}{\emph{outer-path circular split system}} is a split system whose representative circular split networks have shortest paths between pairs of leaves which can all be chosen to lie on the exterior of the diagram, that is, using only edges adjacent to the exterior. 
\end{definition}
 For examples, see Figure~\ref{shorty}.
\begin{figure}
    \centering
    \includegraphics[width=\textwidth]{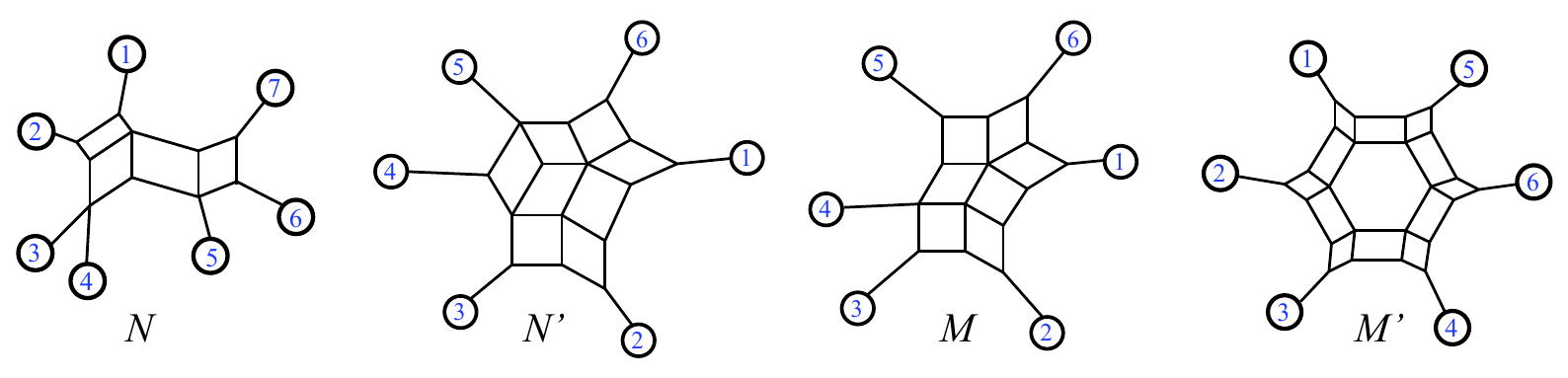}
    \caption{\cite{scalzo} From the left, $N$ and $N'$ are  outer-path circular split networks. In contrast $M$ and $M'$ are non-outer-path circular split networks.}
    \label{shorty}
\end{figure}

\subsubsection{Functions for unweighted networks}\label{functions}
The definitions in this section are repeated from \cite{scalzo}, but originate in \cite{gambette-huber}.

\begin{definition}\label{sig}
For a 1-nested phylogenetic network $N$  define $\Sigma(N)$ to be the circular split system made up of the splits displayed by $N.$ Thus the map $\Sigma$ takes a 1-nested phylogenetic network and outputs the set of splits displayed by $N.$ \end{definition} 

In \cite{gambette-huber} it is shown that $\Sigma(N)$ is  a circular split system, since it can be represented by a circular split network, also referred to as $\Sigma(N)$.
Examples of representations of $\Sigma(N)$ are seen in Figure~\ref{fig:sigma_map}. Note that since the bridges in a split network are invariant, every representation of $\Sigma(N)$ will have the same bridges: these will match the maximal set of bridges of any representation of $N.$ The range of $\Sigma(N)$ will be referred to as the \textcolor{def}{\emph{faithfully phylogenetic}} circular split networks.

From \cite{scalzo} and \cite{gambette-huber}, we repeat an algorithm for drawing a circular split network to represent $\Sigma(N).$ Each split of $N$ must correspond to a class of parallel edges in $\Sigma(N).$ The simplest representing network  would just subdivide the edges of $N$ to make a class for each split, but we show how to construct a representative which makes the splits more visible via bridges and parallelograms.  For $m \ge 5$, each $m$-cycle in $N$ is replaced by an $m$-\emph{marguerite}: a collection of exactly $m^2-4m$ parallelograms arranged in a circle, each sharing sides with two neighbors, specifically organized as follows: each node of the original $m$-cycle is replaced by a rhombus, and then each edge of the cycle is replaced by $m-5$ parallelograms in a row. The  rows are attached to the  rhombi along adjacent edges of each rhombus, so that the whole arrangement has $m(m-5)$ sides on the interior of the original $m$-cycle, and $m(m-3)$ sides on the exterior. Bridges are attached to the $m$ remaining degree-2 vertices, one at each of the rhombi that replaced the original $m$ nodes of the cycle.
\begin{figure}
    \centering
    \includegraphics[width=\textwidth]{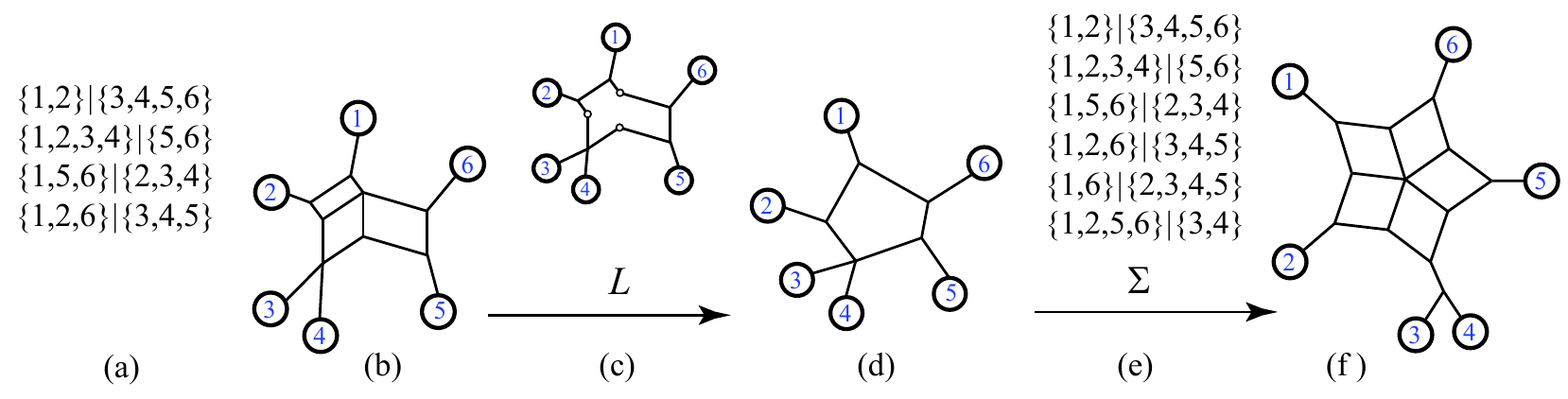}
    \caption{Here we see (a) a split system $s$, with only the non-trivial splits listed (trivial splits are assumed to be included), (b) a circular split network representing $s$, (c) the exterior subgraph of $s$ as a step in the process of applying $L$, (d) the output 1-nested phylogenetic network $N = L(s)$, (e) the split system $\Sigma(N)$  displayed by $N$, again showing only the non-trivial splits, and (f) a representative circular split network also referred to as $\Sigma(N)$. We see that $\Sigma(N) \ge s$, and that the cyclic orders $(1,2,3,4,5,6)$ and $(1,2,4,3,5,6)$ are both consistent with $N$ and with $s.$}
    \label{processo}
\end{figure} 
\begin{figure}
    \centering
    \includegraphics[width=5in]{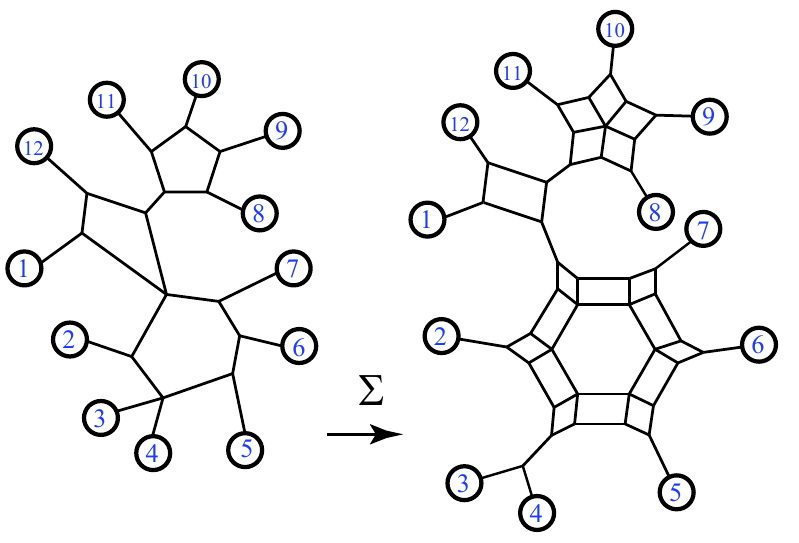}
    \includegraphics[width=5.25in]{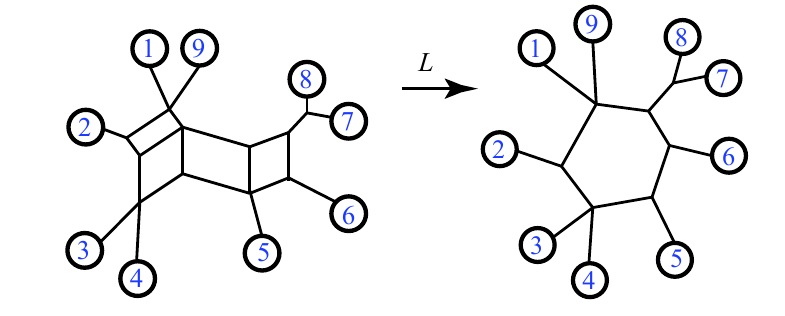}
    \caption{Examples of the functions $\Sigma$ and $L$.}
    \label{fig:sigma_map}
\end{figure}

Now for a function that takes circular split networks to  1-nested phylogenetic networks.  This function is shown to exist in \cite{gambette-huber}, and described on the split networks which are images of the function $\Sigma.$ In \cite{durell} and \cite{scalzo} we define the general function $L$ as follows: 
\begin{definition}
  For a circular split system $s$, define $L(s)$ to be the \emph{smoothed exterior subgraph} of a representative split network $s.$ Thus $L$ takes a circular split system (with a given representation) and outputs a 1-nested phylogenetic network. The operation of $L$ is easy to describe as 1) erasing the interior edges of split network $s$ and 2) smoothing, which here refers to removing any degree-2 nodes that are seen in the exterior subgraph. Such a node is removed, but the two edges adjacent to it are joined to form a single edge.  \end{definition}
    
  Recall that the nodes and edges adjacent to the exterior of a circular split network are an invariant subgraph for the split system, so the function $L$ is well-defined on split systems.  Examples exhibiting $\Sigma$ and $L$ are in Figure~\ref{processo} and Figure~\ref{fig:sigma_map}.


\begin{remark}\label{reflection}~
Note that by its construction, $L$ preserves bridges and cut-point nodes. When restricted to phylogenetic trees, the functions $L$ and $\Sigma$ are both the identity. In \cite{scalzo} several other properties of the two functions are listed, in the process of showing that $L$ and $\Sigma$ form a Galois reflection, as in \cite{primer}. These include the facts that $L$ is surjective but not injective,  $\Sigma$ is injective but not surjective, and  and that $L\circ\Sigma$ is the identity map. 
\end{remark}

\subsubsection{Weights and metrics} We continue to repeat definitions from \cite{scalzo}
Weighted networks can be constructed in two distinct ways: by assigning non-negative real numbers to splits or to edges.  
\begin{definition}
A \textcolor{def}{\emph{weighted phylogenetic network}} $N$ has non-negative real numbers assigned to its edges, described  by a weight function $w_N.$ 
\end{definition}

\begin{definition}
A \textcolor{def}{\emph{weighted split network}} $s$ has non-negative weights assigned to each split, by a weight function $w_s$. Equivalently, $w_s$ assigns a weight to every edge, with the requirement that each edge in a (geometrically parallel) split-class of $s$ has the same weight.
\end{definition}

\begin{definition}
For a weighted phylogenetic network $N$,  or a weighted split network $s$, we denote by $\overline{N}$, respectively $\overline{s}$, the unweighted networks found by forgetting the weights. 
\end{definition}

As in \cite{scalzo}: a pairwise distance function assigns a non-negative real number to each pair of values from $[n]$. We call the lexicographically listed outputs for distinct pairs a \textcolor{def}{\emph{distance vector}}  $\mathbf{d}$, with entries denoted $d_{ij} = \mathbf{d}(i, j)= \mathbf{d}(j, i)$ for each pair of taxa $i \neq j \in [n]$ (also known as a dissimilarity matrix, or discrete metric when obeying the metric axioms.)

\begin{definition}
When the distance vector is \textcolor{def}{\emph{Kalmanson}}, or \textcolor{def}{\emph{circular decomposable}} it means there exists a cyclic order of $[n]$ such that for any subsequence $(i,j,k,l)$ of that order, $\mathbf{d}$ obeys this condition: $$\max\{d_{ij}+d_{kl}, d_{jk}+d_{il}\} \le d_{ik}+d_{jl}.$$
\end{definition}

 \begin{definition}
Given  a weighted split system $s$ on $[n]$ we can derive a metric $\mathbf{d}_s$ on $[n],$ 
$$\mathbf{d}_s(i,j) = \sum_{i\in A, j\in B} w_s(A|B)$$ 
where the sum is over all splits of $s$ with $i$ in one part and $j$ in the other. The metric is often referred to as the distance vector $\mathbf{d}_s.$
 \end{definition}
 
It is well known that Kalmanson metrics are in one-to-one correspondence with weighted circular split networks. Specifically, from \cite{steelphyl}, and as repeated in \cite{scalzo}, we have the following:
\begin{lemma}\label{kalm}
A distance vector $\mathbf{d}$ is Kalmanson with respect to a circular order $c$ if and only if $\mathbf{d}$ = $\mathbf{d}_s$ for $s$ a unique weighted circular split system $s$, (not necessarily containing all trivial splits) with each split $A|B$ of $s$ having both parts contiguous in that circular order $c$.
\end{lemma}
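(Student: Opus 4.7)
The plan is to prove both implications of the biconditional separately, and then to derive uniqueness from the construction used in the necessity direction.

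For the \emph{sufficiency} direction---that a weighted circular split system $s$ with every split contiguous in $c$ yields a Kalmanson vector---I would use the linearity of $\mathbf{d}_s$ in the split weights to reduce to the case of a single split $A|B$ of weight $w$. Fix four indices $i,j,k,l$ appearing in this cyclic order within $c$. The contiguity of $A$ in $c$ rules out the ``crossing'' restriction $\{i,k\}|\{j,l\}$ of $A|B$ to $\{i,j,k,l\}$, so the restriction must be one of: the trivial split, a singleton versus the remaining three, $\{i,j\}|\{k,l\}$, or $\{l,i\}|\{j,k\}$. A brief case-by-case check shows that in each possibility the contribution of $A|B$ to $d_{ik}+d_{jl}$ is at least its contribution to each of $d_{ij}+d_{kl}$ and $d_{jk}+d_{il}$. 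Summing over the splits of $s$ with their non-negative weights then yields the Kalmanson inequality for $\mathbf{d}_s$.

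For the \emph{necessity} direction---that every Kalmanson $\mathbf{d}$ arises from such a weighted system---I would construct $s$ explicitly. Relabel $[n]$ so that $c = (1,2,\ldots,n)$, and for each proper contiguous arc $A = \{i, i+1, \ldots, j\}$ (indices read modulo $n$) with complement $B$, define
$$w_s(A|B) \;=\; \tfrac{1}{2}\bigl(d_{i-1,j} + d_{i,j+1} - d_{i-1,j+1} - d_{i,j}\bigr),$$
using the convention $d_{k,k}=0$. The non-negativity of $w_s(A|B)$ is then precisely the Kalmanson inequality applied to the cyclic subsequence $(i-1, i, j, j+1)$; in degenerate cases ($i=j$, or $A$ comprising all but two adjacent indices) some terms vanish and the inequality is verified separately. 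To show $\mathbf{d}_s = \mathbf{d}$, I would fix two taxa $p,q$ and sum the four-term formula above over the contiguous splits that separate $p$ from $q$. Parameterizing such splits by the pair of ``cut'' positions in $c$ on either side of $\{p,q\}$, the interior terms collapse in a two-sided telescoping pattern, leaving exactly $d_{p,q}$.

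Uniqueness follows immediately from the construction: if $\mathbf{d} = \mathbf{d}_s$ for \emph{any} weighted circular split system with all parts contiguous in $c$, the sufficiency direction guarantees that $\mathbf{d}$ is Kalmanson, so applying the displayed formula to $\mathbf{d}$ recovers each weight $w_s(A|B)$ unambiguously (and assigns weight zero to any contiguous split not appearing in $s$). The main technical obstacle is the telescoping reconstruction step: while each four-point identity is elementary, organizing the cancellations correctly along both arcs of $c$ between $p$ and $q$---and reconciling the boundary contributions from the trivial splits $\{p\}|[n]\setminus\{p\}$ and $\{q\}|[n]\setminus\{q\}$---requires careful index bookkeeping around the cycle.
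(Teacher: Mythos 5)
The paper does not actually prove this lemma; it imports it from the literature (citing \cite{steelphyl} and \cite{scalzo}), so there is no in-paper argument to compare against. Your proof is the standard self-contained one for circular decomposable metrics, and its two main computations are sound: the four-case analysis for sufficiency correctly uses contiguity to exclude the crossing restriction $\{i,k\}|\{j,l\}$, and the reconstruction does work as a double telescoping sum once you index the contiguous splits separating $p$ from $q$ by a pair of gaps, one on each arc between $p$ and $q$. For uniqueness I would add one sentence: what you need is that the four-point formula applied to $\mathbf{d}_s$ returns $w_s$, and this follows either by evaluating the alternating sum $d_{i-1,j}+d_{i,j+1}-d_{i-1,j+1}-d_{i,j}$ on a single contiguous split (it gives $2w$ for the split $\{i,\dots,j\}|B$ and $0$ for every other contiguous split), or from your telescoping identity together with the fact that both the space of contiguous-split weightings and the space of distance vectors have dimension ${n \choose 2}$, so a one-sided inverse of a linear map is two-sided.

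There is one step that genuinely fails as written: the claim that in the degenerate case $i=j$ the non-negativity of $w_s(A|B)=\tfrac{1}{2}(d_{i-1,i}+d_{i,i+1}-d_{i-1,i+1})$ ``is verified separately.'' This quantity is a triangle-inequality defect, and the Kalmanson condition as defined in this paper quantifies only over four \emph{distinct} indices, so it says nothing about triples. One can build a non-negative symmetric array satisfying every four-point Kalmanson inequality whose decomposition assigns a negative weight to a trivial split (e.g., give the trivial split $\{2\}$ weight $-\epsilon$ and all other contiguous splits large non-negative weights; no four-point inequality detects the trivial split). So for the trivial splits you must either additionally assume $\mathbf{d}$ satisfies the triangle inequality (i.e., read ``Kalmanson metric'' rather than ``Kalmanson distance vector''), extend the Kalmanson condition to degenerate quadruples, or allow the trivial splits to carry real rather than non-negative weights. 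This is a known wrinkle in the statement itself rather than a defect of your strategy, and it is harmless for the paper's applications (resistance and minimum-path distances are metrics), but your proof should not assert that the degenerate inequality follows from the stated hypotheses.
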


\begin{definition}
We define the \textcolor{def}{\emph{minimum path distance}} vector $\mathbf{d}_N$ for a weighted 1-nested phylogenetic network $N,$ where 
$$\mathbf{d}_N(i,j) = \min_p\{\sum_{e \in p} w_N(e)~|~p  \text{ is a path connecting } i,j\}$$ where the minimum is over paths $p$ from leaf $i$ to leaf $j,$ and each sum is over edges in one of those paths. Examples are calculated in Figures~\ref{swlwuno} and~\ref{twoo}.
\end{definition}

\subsubsection{Resistance distance}\label{sec_weight} Now we define a new kind of pairwise distance functions on the leaves of a phylogenetic network.
Isolating sections of circuit-parallel paths between two leaves allows the Ohm relations, together with the $Y$-$\Delta$ transformation, to be used to find the effective resistance between those leaves. A simplifying fact is that the resistance between two leaves only depends on the resistances of edges that are in paths between those leaves. (We use the term \emph{pairwise circuit} $P_{ij}$ to refer to the edges that are in any path between leaves $i,j.$ For example see Figure~\ref{bigp}.)

There is a well-known alternate method for calculating effective resistances.  As defined in \cite{klein93} and \cite{bapat04}, the resistance distance matrix for a graph $G$ with $n$ total vertices (leaves and non-leaf nodes) is given by:

 $$\Omega_{ij}=\Gamma_{ii}^{-1}+\Gamma_{jj}^{-1}-2\Gamma_{ij}^{-1}$$ 
 
 where $\Gamma=L+1/n,$
the Laplacian matrix of $G$ plus the $n\times n$ matrix with $1/n$ for every entry.
Our resistance distance for phylogenetic networks uses entries of the matrix $\Omega$. 
\begin{definition}
We define the \textcolor{def}{\emph{resistance distance}} vector $\mathbf{d}_N^R$ for a positive weighted  phylogenetic network $N,$ where 
$\mathbf{d}_N^R(i,j)$ is the resistance distance on the graph between leaves $i$ and $j$. That is, $\mathbf{d}_N^R(i,j) = \Omega_{ij}$ for leaves $i$ and $j$. The distance can also be calculated using the basic relations of Ohm's law. Examples of the resistance distance vector are in Figures~\ref{s795},~\ref{ex23} and~\ref{level2resist}.\end{definition}

\subsubsection{Weighted Functions} We next define functions between the weighted split networks and the weighted phylogenetic networks. As previously explained in \cite{durell} and \cite{scalzo}, we begin by extending the function $L$ to a weighted version $L_w.$

\begin{definition}\cite{scalzo}
For a weighted circular split network $s$ we define $L_w(s)$ to be the 1-nested phylogenetic network $L(\overline{s})$ (the smoothed exterior subgraph of the unweighted version of $s$), with weighted edges. The weight of an edge in the image is found by summing the  weights of splits which contribute to that edge. Let $p_s(e)$ be the set of splits $A|B$ of $s$, such that $A|B$ is represented by edges in $s$ one of which is used to form the edge $e$ in $L(s)$. (Reccall that $e$ in $L(s)$ is formed by smoothing a path of edges from the exterior subgraph of $s$.) If $w_s$ is the weight function on $s$ then the weight function on $L_w(s)$ is:

$$w_{L_w(s)}(e) = \sum_{A|B \in p_s(e)} w_s(A|B).$$  
\end{definition}
By this definition we have the following (from \cite{scalzo}): \begin{lemma}\label{comm}
$\overline{L_w(s)} = L(\overline{s}).$
\end{lemma}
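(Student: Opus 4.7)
The plan is essentially to unwind the definitions, since the lemma is a bookkeeping identity rather than a statement with any real combinatorial content. The definition of $L_w(s)$ explicitly constructs its output in two stages: first take the unweighted 1-nested phylogenetic network $L(\overline{s})$ as the underlying graph, and then assign a weight to each edge $e$ of $L(\overline{s})$ via the formula $w_{L_w(s)}(e) = \sum_{A|B \in p_s(e)} w_s(A|B)$. The operator $\overline{(\cdot)}$ is defined to forget the weights, returning just the underlying graph. Composing these two operations therefore gives $\overline{L_w(s)} = L(\overline{s})$ directly.

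To write this out cleanly I would first state that, by construction, $L_w(s)$ is a weighted graph whose underlying simple graph is $L(\overline{s})$; this is how the image of $L_w$ is built. Then I would note that $\overline{L_w(s)}$ is by definition the underlying simple graph of the weighted graph $L_w(s)$, obtained by discarding the weight function $w_{L_w(s)}$. These two underlying graphs coincide, so $\overline{L_w(s)} = L(\overline{s})$.

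The only thing worth checking is that the construction of $L_w(s)$ genuinely makes sense as a weighted version of $L(\overline{s})$, i.e., that the assignment $e \mapsto \sum_{A|B \in p_s(e)} w_s(A|B)$ is well-defined on the edge set of $L(\overline{s})$. This is because the smoothing step in the definition of $L$ identifies each edge $e$ of $L(\overline{s})$ with a specific maximal path of edges in the exterior subgraph of $\overline{s}$, and each edge of that split network lies in a unique split-class; thus $p_s(e)$ is a well-defined finite set of splits and the sum is unambiguous.

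I do not anticipate any obstacle: the weighting step and the forget-weights step are inverse in the relevant sense, and no properties of the specific weight function $w_s$ or of the Galois-reflection structure of $(L, \Sigma)$ enter the argument.
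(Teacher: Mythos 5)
Your proof is correct and matches the paper's treatment: the paper states this lemma as an immediate consequence of the definition of $L_w$ (which builds $L_w(s)$ precisely as $L(\overline{s})$ equipped with a weight function), so forgetting the weights recovers $L(\overline{s})$ by construction. Your additional check that the weight assignment is well-defined on edges of $L(\overline{s})$ is a reasonable bit of extra care but not needed for the identity itself.
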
 
For an example of $L_w$ see Figure~\ref{shortynum}.
\begin{figure}
    \centering
    \includegraphics[width=\textwidth]{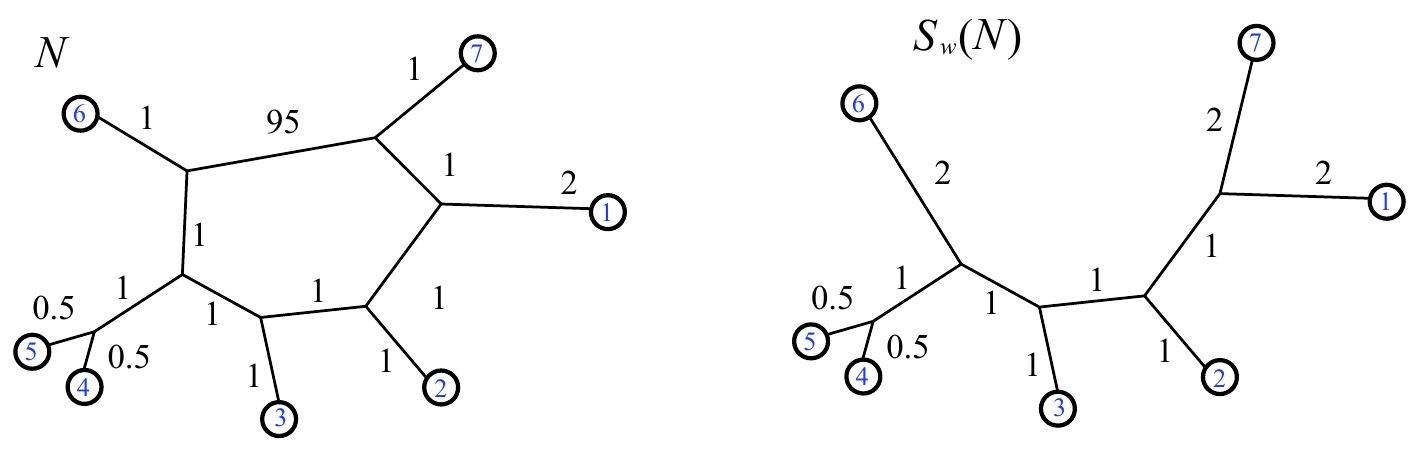}
    \caption{Example of the action of $S_w$. Here  $$\mathbf{d}_N = (4, 5, 6.5, 6.5, ~7, ~4,$$ $$\hspace{.652in}3, 4.5, 4.5, ~5, ~4,$$ $$\hspace{.5in}\hspace{.35in}3.5, 3.5, ~4, ~5,$$ $$\hspace{.5in}\hspace{.5in}\hspace{.2in}1, 3.5, 6.5,$$ $$\hspace{.35in}\hspace{.25in}\hspace{.25in}\hspace{.5in}3.5, 6.5,$$
    $$\hspace{.5in}\hspace{.5in}\hspace{.25in}\hspace{.5in}7).$$}
    \label{swlwuno}
\end{figure}
From \cite{scalzo}, we have the fact that the minimum path distance is Kalmanson for planar networks. Therefore, as in that source, we can make the following:

\begin{definition}
Given a weighted unrooted phylogenetic network $N$ that can be drawn on the plane with leaves on the exterior, we define $S_w(N)$ to be the unique weighted circular split network with the same minimum path distance vector as $N$. That is, $\mathbf{d}_N = \mathbf{d}_{S_w(N)}.$ This image is calculable, for instance, as the circular split network $S_w(N) = \mathcal{N}(\mathbf{d}_N),$ where $\mathcal{N}$ is the neighbor-net algorithm defined by \cite{Bryant2007} and implemented as in Splits-Tree\cite{Huson1998}.  Thus to find $S_w(N)$ we first calculate the minimum path distance vector, $\mathbf{d}_N$, and then use any algorithm (such as neighbor-net) to find the split network.
\end{definition}

For an example see Figure~\ref{swlwuno}. Another example of $S_w$, on a 2-nested network, is in Figure~\ref{shortynum}.
When we restrict to the domain of weighted circular split networks arising from weighted 1-nested networks, the codomain of $S_w$ is the outer-path circular split networks, and the distance vector is preserved by the map $L_w.$ Specifically from \cite{scalzo} we have: 
\begin{lemma}\label{lemw}
For any weighted 1-nested phylogenetic network $N$, if $s=S_w(N)$ then $s$ is outer-path and thus $\mathbf{d}_{L_w(s)} = \mathbf{d}_s.$
\end{lemma}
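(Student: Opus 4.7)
The plan is to handle the two conclusions in sequence: first establish that $s = S_w(N)$ is outer-path, then deduce the distance equality from the outer-path property together with the definition of $L_w$. Since $N$ is 1-nested it admits a plane embedding with all leaves on the exterior, giving a cyclic order $c$ of $[n]$. Using the previously cited fact (from \cite{scalzo}) that $\mathbf{d}_N$ is Kalmanson with respect to $c$, Lemma~\ref{kalm} gives that the unique weighted split system $s = S_w(N)$ has all of its splits contiguous in $c$, so that $s$ admits a circular split network drawing with leaves in cyclic order $c$.

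For the outer-path claim I would work block by block in $N$, leaning on the blocks-and-bridges (cactus) decomposition afforded by 1-nestedness: each bridge of $N$ becomes a bridge of $s$ sitting on the exterior, and each cycle of $N$ corresponds (by the marguerite construction for $\Sigma$, together with Lemma~\ref{comm} and Remark~\ref{reflection}) to a piece of $s$ whose non-trivial splits are contiguous subsets of the cycle's leaves. The main local check is that within a single cycle, the Kalmanson-forced split weights in $s$ distribute along the exterior edges so as to mirror the minimum-path routing in the original cycle: for any two cycle-attachment leaves, the shorter arc of $N$ corresponds to an exterior path in $s$ whose total edge weight equals the minimum-path distance. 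Concatenating these local exterior shortest paths across bridges and cut-points yields, for every pair of leaves $i,j$, an exterior path in $s$ of weight $\mathbf{d}_s(i,j) = \mathbf{d}_N(i,j)$, which is the outer-path condition.

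Granted outer-path, both inequalities in the distance equality follow directly. A shortest exterior path $P$ from $i$ to $j$ in $s$ has weight $\sum_{A|B \text{ separates } i,j} w_s(A|B) = \mathbf{d}_s(i,j)$, and since $L_w$ deletes only interior edges of $s$ and smooths exterior degree-$2$ nodes while summing incident weights, $P$ descends to a path of identical weight in $L_w(s)$, giving $\mathbf{d}_{L_w(s)}(i,j) \le \mathbf{d}_s(i,j)$. Conversely, any path in the 1-nested graph $L_w(s)$ lifts by reversing the smoothing to a walk of the same weight on the exterior of $s$, and such a walk must cross every split separating $i$ from $j$ at least once, so its weight is at least $\mathbf{d}_s(i,j)$.

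The principal obstacle is the cycle-level computation underlying the outer-path claim, since one must verify not just that the Kalmanson-forced split weights sum correctly for a single pair of cycle-attachment leaves but simultaneously for every such pair—the over-determined situation that forces a clean alignment with the shorter-arc minimum-path distances of $N$. A clean route is to compute the marguerite split weights of a single cycle of $N$ explicitly in terms of its edge weights, verify outer-path by inspection in that case, and then patch the local pictures together across bridges and cut-points using the blocks-and-bridges decomposition of the 1-nested $N$.
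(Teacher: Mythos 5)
The paper itself contains no proof of Lemma~\ref{lemw}: it is imported verbatim from \cite{scalzo}, so there is no in-paper argument to compare yours against and I can only judge the proposal on its own terms. The second half of your argument --- that the outer-path property forces $\mathbf{d}_{L_w(s)}=\mathbf{d}_s$ --- is sound: an exterior shortest path crosses each separating split-class exactly once, hence has weight $\mathbf{d}_s(i,j)$, and descends under smoothing to a path of equal weight in $L_w(s)$; conversely any path in $L_w(s)$ lifts to an exterior walk that must cross every separating split at least once. That part needs no repair.

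The gap is in the first half, and it is twofold. First, the decisive step --- computing the split decomposition (isolation indices) of the minimum-path metric restricted to a single weighted cycle and checking that exterior paths in the resulting split network realize the shorter-arc distances --- is essentially the whole content of the lemma at the cycle level, and you explicitly defer it (``verify outer-path by inspection''). Nothing in the proposal actually identifies which splits appear in $s=S_w(N)$ or with what weights. Second, the scaffolding you propose for that computation is off-target: you describe the piece of $s$ sitting over a cycle of $N$ via ``the marguerite construction for $\Sigma$,'' but the marguerite represents $\Sigma(\overline{N})$, which displays \emph{all} splits of $N$, whereas $S_w(N)$ is determined by the minimum-path metric and in general displays strictly fewer. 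Concretely, a $4$-cycle on leaves $1,2,3,4$ with edge weights $1,1,1,100$ has minimum-path metric equal to that of the weighted path $1\!-\!2\!-\!3\!-\!4$ with unit edges; the unique split decomposition from Lemma~\ref{kalm} assigns weight $0$ to the split $\{2,3\}|\{1,4\}$ and to the trivial splits at $2$ and $3$, so one of the two nontrivial cycle splits disappears entirely. This is exactly the edge-deletion phenomenon the paper remarks on near Figure~\ref{s795}, and it is why $\Sigma(N)\ge s$ in Figure~\ref{processo} is generally strict. A correct proof must first derive the surviving splits and their weights from the shortest-path metric on a weighted cycle and only then verify the outer-path condition and patch across bridges; Lemma~\ref{comm} and the $\Sigma$ construction cannot substitute for that computation.
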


$S_w$ is defined using the minimum path distance metric. Similarly, since  we will see that the resistance distance is Kalmanson in Theorem~\ref{kal}, then by Lemma~\ref{kalm} we can make the following definition using resistance distance.
\begin{definition}
For a weighted 1-nested phylogenetic network $N$ we define $R_w(N)$ to be the unique weighted circular split network corresponding to the resistance distance $\mathbf{d}_N^R.$ This image is calculable, for instance, as the circular split network $R_w(N) = \mathcal{N}(\mathbf{d}^R_N),$ where $\mathcal{N}$ is the neighbor-net algorithm defined by \cite{Bryant2007} and implemented as in Splits-Tree\cite{Huson1998}. The algorithm neighbor-net is guaranteed to produce $R_w(N)$ using input  $\mathbf{d}_N^R.$
\end{definition}
There are several algorithms for finding the unique circular split system associated to a Kalmanson network; here we recommend neighbor-net and its implementation in  \cite{Huson1998}. That algorithm finds both the circular split network and its weighting. However, for a weighted 1-nested phylogenetic network $N$, due to our Theorems~\ref{kal} and~\ref{tsplits} we can calculate the weighted circular split network $R_w(N)$ directly, bypassing both the calculation of the metric and the use of neighbor-net. The function $R_w$ is shown by example in Figure~\ref{s795}. For another example, on a 2-nested network that happens to be Kalmanson, see Figure~\ref{level2resist}.

 \begin{figure}
    \centering
    \includegraphics[width=6.5in]{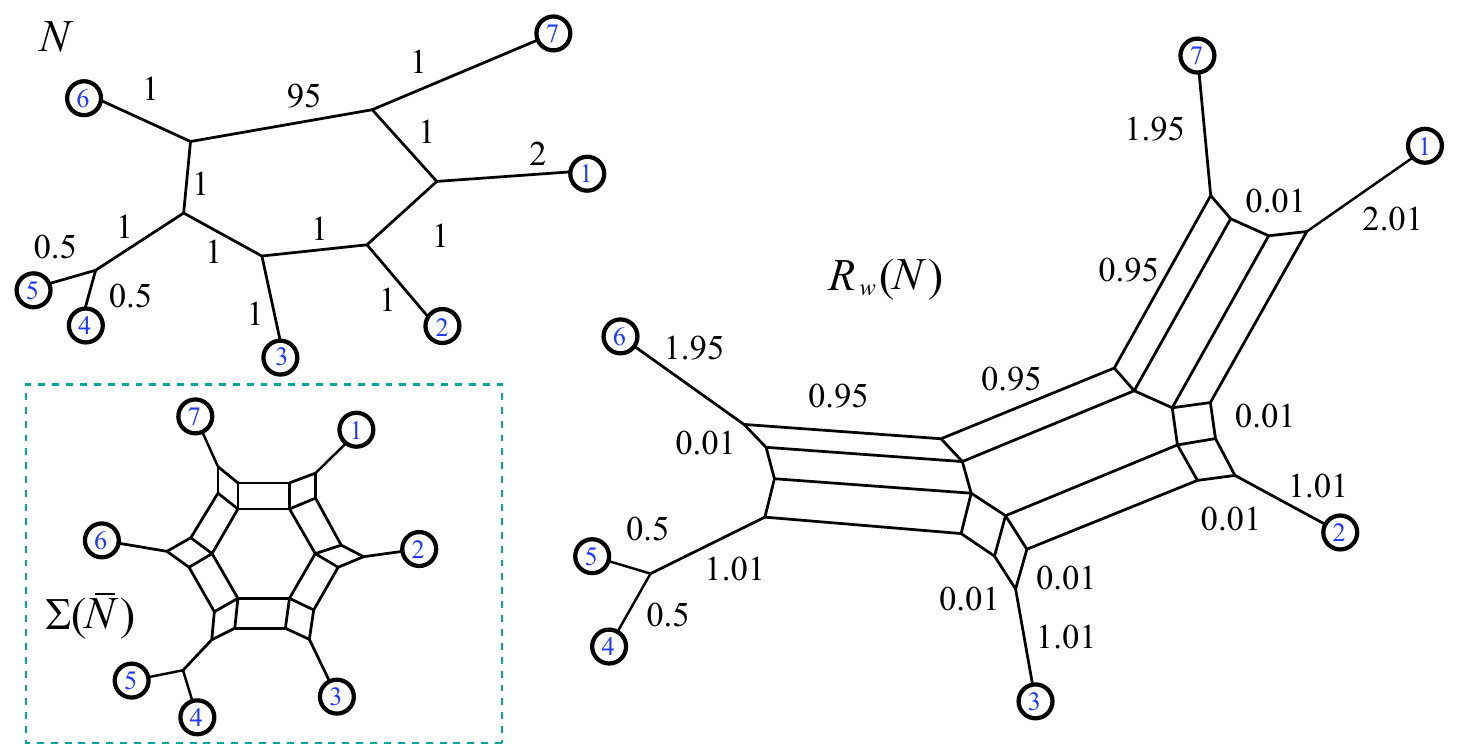}
    \caption{Example of the function $R_w$ which takes a weighted phylogenetic network and outputs the split network associated to its resistance distance. Neighbor-net can be run on input $\mathbf{d}_N^R$, or the results of Theorem~\ref{tsplits} can be used: for instance the value 0.95 for the split $\{1,2,3,7\}|\{4,5,6\}$ is found by 95(1)/(95+1+1+1+1+1). Here $$\mathbf{d}_N^R = (3.99, 4.96, 6.41, 6.41, 6.84, 3.99,$$ $$\hspace{.855in}2.99, 4.46, 4.46, 4.91, 3.96,$$ $$\hspace{.5in}\hspace{.75in}3.49, 3.49, 3.96, 4.91,$$ $$\hspace{.5in}\hspace{.5in}\hspace{.75in}1, ~3.49, 6.34,$$ $$\hspace{.5in}\hspace{.5in}\hspace{.5in}\hspace{.5in}3.49, 6.34,$$ 
    $$\hspace{1.75in}\hspace{.45in}\hspace{.25in}6.75).$$}
    \label{s795}
\end{figure}

\begin{remark}\label{coreflection}~
When restricted to phylogenetic trees, the functions $L_w$ and $S_w$ are both the identity, and $S_w=R_w$. In \cite{scalzo} several other properties are listed, in the process of showing that $L_w$ and $S_w$ form a Galois coreflection when restricted to weighted 1-nested  phylogenetic networks and outer-path circular split networks. These include the facts that $L_w$ is injective but not surjective,  $S_w$ is surjective but not injective, and  $S_w\circ L_w$ is the identity map.  
\end{remark}


\section{Kalmanson networks}\label{thms}

The main result in this section is that the resistance metric is Kalmanson for 1-nested phylogenetic networks, and that the unique associated split network has the same exterior form as the original 1-nested phylogenetic network.
 First we show that $\mathbf{d}_N^R$ obeys the Kalmanson condition: there exists a
circular ordering of $[n]$ such that for all $i < j < k < l $ in that ordering,

$$\max\{\mathbf{d}_N(i,j)+\mathbf{d}_N(k,l),\mathbf{d}_N(j,k)+\mathbf{d}_N(i,l)\} \le \mathbf{d}_N(i,k)+\mathbf{d}_N(j,l).$$

\begin{thm}\label{kal}
 Given a 1-nested phylogenetic network $N$ with positive weighted edges and $n$ leaves, the resistance metric on its leaves is Kalmanson.   
\end{thm}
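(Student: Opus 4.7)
The plan is to fix a circular order of the leaves coming from the outer-planar embedding of $N$ and then prove the Kalmanson inequality block-by-block, where the blocks of $N$ are its bridges and its cycles (since $N$ is 1-nested, every edge is in at most one cycle, so every block is of one of these two types).

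First I would decompose the resistance distance along the block tree of $N$. For leaves $a,b$, the unique path in the block tree determines, for each cycle $C$ along the way, the two ``entry'' attachment points $\pi_C(a), \pi_C(b)$ on $C$. Because $N$ is 1-nested, the subgraph of $N$ relevant to the pair $\{a,b\}$ is a series of bridges and cycles joined at cut points, and Ohm's law gives
\[
\mathbf{d}_N^R(a,b) \;=\; \sum_{e \text{ a bridge on the path}} w_N(e) \;+\; \sum_{C \text{ a cycle on the path}} R_C\bigl(\pi_C(a),\pi_C(b)\bigr),
\]
where $R_C(u,v)=\alpha\beta/T$ with $\alpha,\beta$ the two arc-lengths between $u$ and $v$ on $C$ and $T$ the total length of $C$.

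Next I would reduce the Kalmanson inequality to a block-by-block statement. Fix $i<j<k<l$ in the chosen cyclic order and, for each block $B$, denote by $A_B, B_B, C_B$ the contribution of $B$ to $\mathbf{d}(i,j)+\mathbf{d}(k,l)$, $\mathbf{d}(j,k)+\mathbf{d}(i,l)$, and $\mathbf{d}(i,k)+\mathbf{d}(j,l)$, respectively. If for every block $B$ we have $A_B\le C_B$ and $B_B\le C_B$, then
\[
\max\Bigl\{\sum_B A_B,\ \sum_B B_B\Bigr\} \;\le\; \sum_B \max\{A_B,B_B\} \;\le\; \sum_B C_B,
\]
which is the Kalmanson condition for $\mathbf{d}_N^R$. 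So it suffices to check each block.

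For a bridge (a trivial parallel contribution), the circularity of the induced split --- each side is contiguous in the cyclic order --- rules out the bad ``alternating'' case, and a short case analysis on how $\{i,j,k,l\}$ meets the two sides shows $A_B,B_B\le C_B$ (with equality possible).
For a cycle $C$ with four distinct attachment points $v_a,v_b,v_c,v_d$ appearing on $C$ in that cyclic order (this is forced by outer planarity), let $x,y,z,w$ be the arc lengths $v_a\!\to\! v_b\!\to\! v_c\!\to\! v_d\!\to\! v_a$ with $x+y+z+w=T$. Then $R_{ab}=x(T-x)/T$, etc., and a direct expansion gives
\[
C_B - A_B \;=\; \tfrac{1}{T}\bigl[(x+y)(z+w)+(y+z)(x+w) - x(T-x)-z(T-z)\bigr] \;=\; \tfrac{2yw}{T} \;\ge\;0,
\]
and symmetrically $C_B-B_B = 2xz/T\ge 0$. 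Degenerate cases in which some of the $\pi_C$'s coincide reduce immediately to the triangle inequality for resistance distance, which $R_C$ satisfies since it is a metric on $C$.

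The only real obstacle is the cycle identity above; once that crisp cancellation $C_B-A_B=2yw/T$ is in hand, everything else is bookkeeping. I would present the cycle identity as a separate small lemma, and then string the bridge case, the cycle case, and the block-tree decomposition together to conclude that $\mathbf{d}_N^R$ satisfies the Kalmanson inequality with respect to the outer-planar cyclic order of $[n]$.
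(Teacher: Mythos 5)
Your proof is correct, and it is organized differently from the paper's. The paper fixes the same outer-planar cyclic order but then localizes the comparison to the intersection $I = P_{ik}\cap P_{jl}$ of the two ``crossing'' pairwise circuits, discarding the terms common to all three Kalmanson sums and running a structural case analysis on $I$ (a single cycle, a series of at least two cycles, or a path), with the single-cycle case carrying the essential arc-product computation. You instead decompose each distance over the whole block tree (legitimate, since in a 1-nested network every block is a bridge or a cycle and resistances in series add), and then use the elementary bound $\max\{\sum_B A_B,\sum_B B_B\}\le\sum_B\max\{A_B,B_B\}$ to reduce everything to a per-block check: contiguity of the split for bridges, and the identity $C_B-A_B=2yw/T$, $C_B-B_B=2xz/T$ for cycles, which I verified and which is exactly the computation hiding inside the paper's Case~1 (their ``clearly the third sum has a larger numerator''). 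Your degenerate cases need no separate appeal to the triangle inequality, since the identity already holds with some arc lengths set to zero. What your route buys is uniformity --- the paper's Cases~2 and~3 dissolve into the block-wise summation --- at the cost of slightly obscuring which Kalmanson inequalities are equalities; the paper explicitly values that information (see the remark after Theorem~\ref{tsplits}), though your identity recovers it too: strictness in each inequality is governed by whether some cycle on the relevant paths has both opposite arcs $y,w$ (respectively $x,z$) of positive length.
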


\begin{proof}

  The cyclic order that we need to exist in order to demonstrate the Kalmanson property is found by choosing any cyclic order of $[n]$ consistent with $N$. That is, we choose an outer planar drawing of $N$ and use the induced cyclic order of the leaves arranged around the exterior of that drawing.
  
  Begin by noting that for each pair of the four leaves $i,j,k,l$ there is a sub-graph, called the \emph{pairwise circuit}, for instance $P_{ik}$, made of all the edges which are part of any path between those two leaves.  The pairwise circuit will contain perhaps some cycles---it will in fact be a series of cycles connected by paths. We are especially interested in the intersection $I$ of the two ``crossing'' pair circuits, $I$ = $P_{ik} \cap P_{jl}$.  There are three basic cases to consider.
  
  Case 1: The intersection $I$ is a single cycle.
  Here the four leaves $i,j,k,l$ have pairwise circuits that reach the cycle $I$ at four different nodes. Notice that any of the two pairwise circuits summed in the Kalmanson condition will include all four of the smaller pairwise circuits from each of the four leaves $i,j,k,l$ to the node  of $I$ closest to that respective leaf. We will call those closest nodes $v_i, v_j, v_k, v_l.$ The three sums in the Kalmanson condition all share some terms in common: those which come from the weighted edges in pairwise paths between the four leaves and the respective nodes $v_i, v_j, v_k, v_l.$ Discarding these common terms, we are left with terms that come from the weighted edges in $I$. Thus the only differences between the three sums in the Kalmanson condition arise from the different contributions of the cycle $I$.  We denote by $a, b, c, d$ the cumulative edge weights between the four nodes $v_i, v_j, v_k, v_l,$ following the cyclic order. For instance, in Figure~\ref{case1}, $a$ is the weight of the edge between $v_l$ and $v_i$ and $b$ is the sum of the weights on edges of $I$ between the nodes  $v_i$ and $v_j$.

  Thus we can write the sums explicitly: $$\mathbf{d}_N^R(i,j)+\mathbf{d}_N^R(k,l) = \mathbf{d}_N^R(i,v_i)+ \mathbf{d}_N^R(j,v_j)+\mathbf{d}_N^R(k,v_k)+\mathbf{d}_N^R(l,v_l)+ \frac{b(a+d+c)}{a+b+c+d}+\frac{d(a+b+c)}{a+b+c+d};$$

  $$\mathbf{d}_N^R(j,k)+\mathbf{d}_N^R(i,l)= \mathbf{d}_N^R(i,v_i)+ \mathbf{d}_N^R(j,v_j)+\mathbf{d}_N^R(k,v_k)+\mathbf{d}_N^R(l,v_l)+\frac{c(a+b+d)}{a+b+c+d}+\frac{a(b+c+d)}{a+b+c+d};$$
  
  $$\mathbf{d}_N^R(i,k)+\mathbf{d}_N^R(j,l)= \mathbf{d}_N^R(i,v_i)+ \mathbf{d}_N^R(j,v_j)+\mathbf{d}_N^R(k,v_k)+\mathbf{d}_N^R(l,v_l)+\frac{(a+d)(b+c)}{a+b+c+d} +\frac{(a+b)(c+d)}{a+b+c+d}.$$

   After discarding the common terms, we consider just the remaining sums of fractions. All the edge weights are positive, and the denominators of all three are the same. Clearly the third sum, when expanded, has a numerator larger than either of the first two.
   \begin{figure}
      \centering
       \includegraphics[width=5.25in]{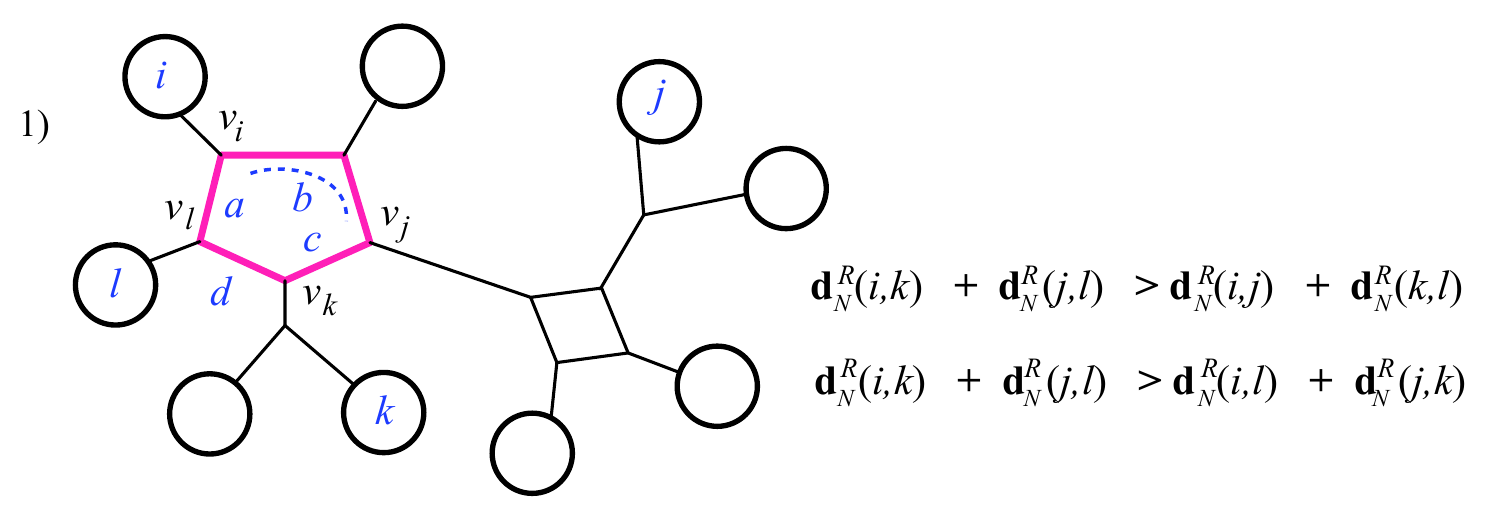}
      \caption{Case 1 of Theorem~\ref{kal}: the highlighted edges are the intersection $I$ of the pairwise circuits between leaves $i,k$ and $j,l.$}
      \label{case1}
  \end{figure}

  Case 2: The intersection $I$ is a series of cycles containing at least two cycles. In this case there are two possible ways that the inequalities are satisfied, depending on which pair of consecutive leaves ($i,j$ or $j,k$) reach the same end of $I$, that is, have their attaching nodes ($v_i,v_j$ or $v_k$) in $I$ at the same end of $I$. In Figure~\ref{case2} below we choose $i,j$ to do so, on the left-hand cycle, but the other option is similar. Checking this case can be done visually for the equality:  $\mathbf{d}_N^R(i,k)+\mathbf{d}_N^R(j,l) = \mathbf{d}_N^R(i,l)+\mathbf{d}_N^R(j,k)$ since the two sums end up using precisely the same effective resistances. That is,
  both $\mathbf{d}_N^R(i,k)+\mathbf{d}_N^R(j,l)$ and  $\mathbf{d}_N^R(i,l)+\mathbf{d}_N^R(j,k)$ have all terms in common: both the portions from the paths outside of $I$ as in case 1, and the summands contributed by $I$, which are the terms:
  $$\frac{c(a+b)}{a+b+c}+\frac{a(b+c)}{a+b+c}+\frac{x(w+y)}{w+x+y}+\frac{w(x+y)}{w+x+y}.$$
  
  The inequality $\mathbf{d}_N^R(i,k)+\mathbf{d}_N^R(j,l) > \mathbf{d}_N^R(i,j)+\mathbf{d}_N^R(k,l)$ (for the subcase where again $i,j$ reach the same end of $I$) is easily checked. Here, after discarding the terms in common, the larger sum contains more terms than the smaller (from the parts of $I$ not in the pairwise circuits for $i,j$ and $k,l$). As well, when the smaller sum has terms with denominator matching a term in the larger, the numerator is indeed larger in the latter. For instance, in Figure~\ref{case2}, after discarding the common terms contributed by the paths outside of $I$, the sum $\mathbf{d}_N^R(i,j)+\mathbf{d}_N^R(k,l)$ has the sum contributed by $I$: $$\frac{b(a+c)}{a+b+c}+\frac{y(w+x)}{w+x+y}.$$ The numerator here is exceeded by the sum contributed by $I$ in $\mathbf{d}_N^R(i,k)+\mathbf{d}_N^R(j,l)$ as just listed above. Finally, notice that there are sub-cases of Case 2 in which the smaller sum will have fewer or no terms at all contributed by $I$; these occur when $I$ includes a path at one end or at both ends. See Figure~\ref{case2b} for example. 
  \begin{figure}
      \centering
       \includegraphics[width=5.25in]{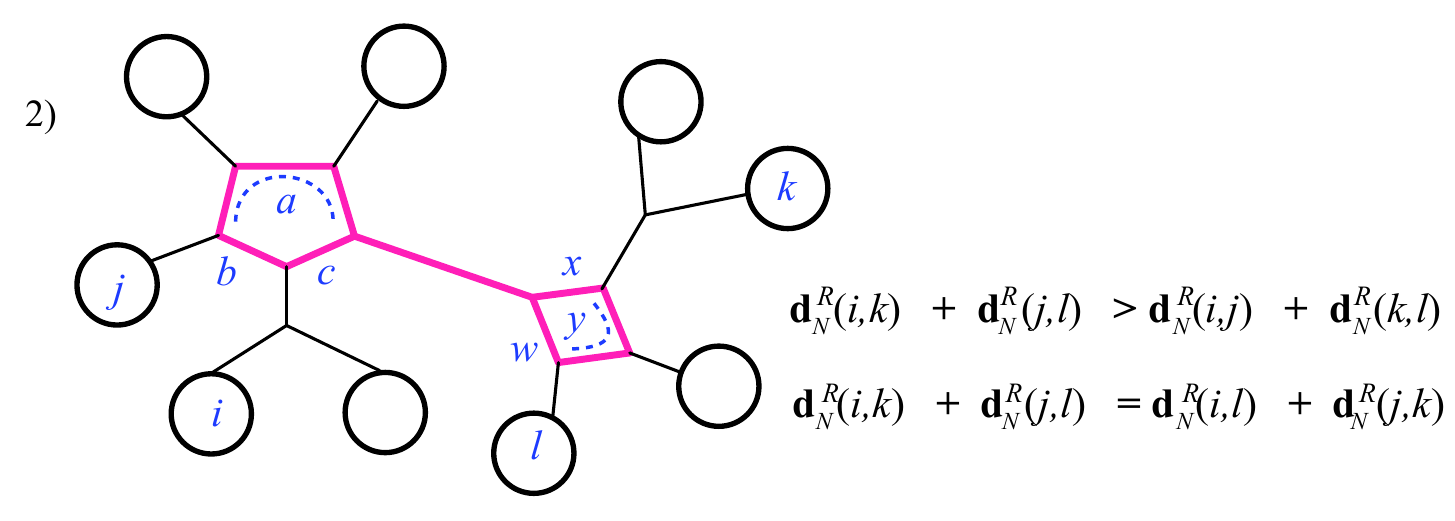}
      \caption{Case 2 of Theorem~\ref{kal}: the highlighted edges are the intersection $I$ of the pairwise circuits between leaves $i,k$ and $j,l.$}
      \label{case2}
  \end{figure}

  \begin{figure}
      \centering
       \includegraphics[width=5.25in]{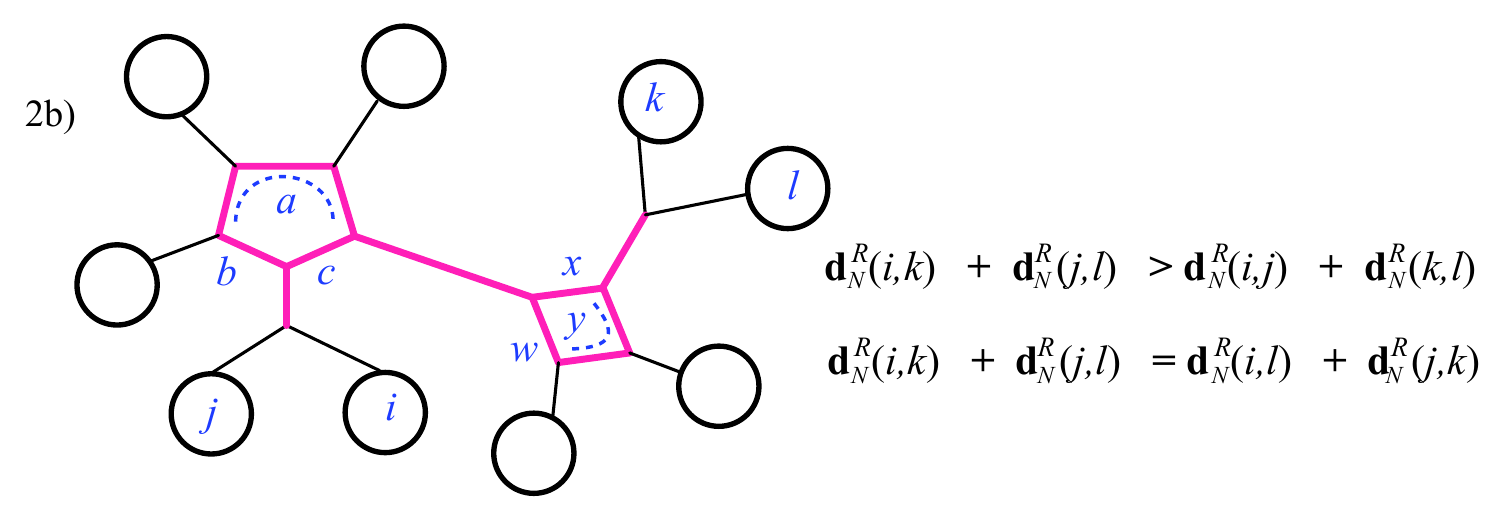}
      \caption{Case 2 of Theorem~\ref{kal} continued. Here $v_i=v_j$ and $v_k=v_l$.}
      \label{case2b}
  \end{figure}
  
  Case 3: The intersection $I$ is a path. In this case it is quickly verified that the Kalmanson inequality is satisfied as an equality. See Figure~\ref{case3} for example.
  
  \begin{figure}
      \centering
       \includegraphics[width=5.25in]{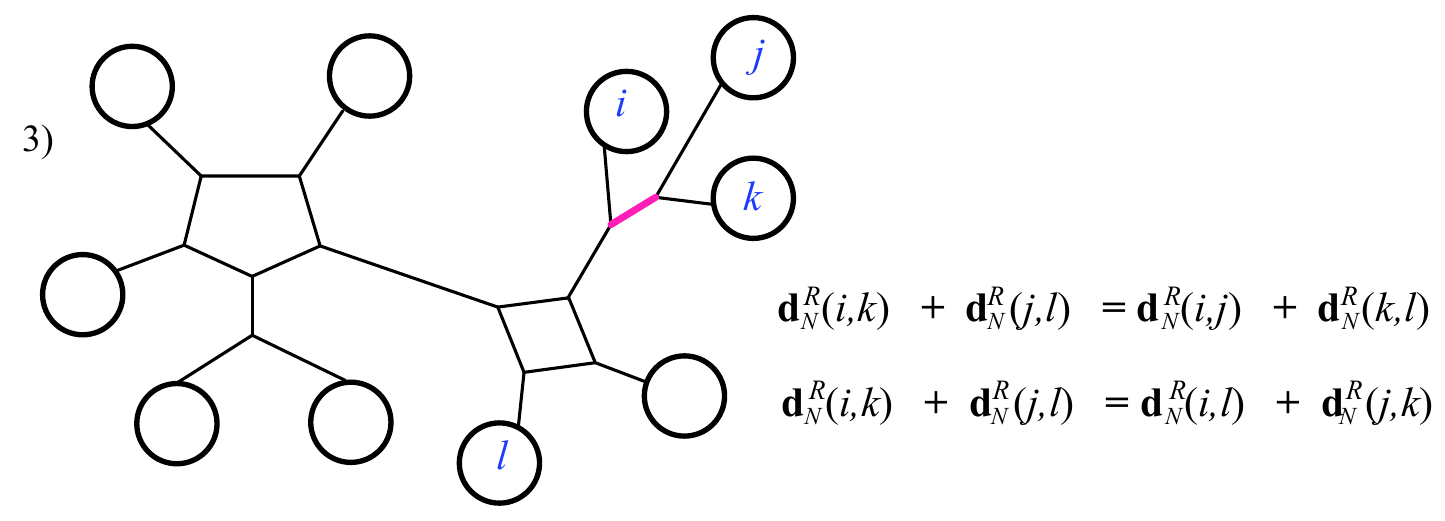}
      \caption{Case 3 of Theorem~\ref{kal}.}
      \label{case3}
  \end{figure}
\end{proof}

The fact that effective resistance distance is a Kalmanson metric immediately suggests that it would be a good candidate for modelling weighted phylogenetic networks. First there is the intuition from experience that if two pathways of heredity exist, the ancestor individual or species will have more in common with the extant individual or species.  Thus mutations in the genetic code play the role of resistors to the flow of information. 

Secondly, Kalmanson metrics are known to be the only example for which each metric is represented uniquely by a circular split system, as seen in Lemma~\ref{kalm}. In the case of the resistance distance, the associated unique split network has an additional advantage: it is guaranteed to represent faithfully every split displayed by the original 1-nested network. 
\begin{thm}\label{tsplits}
 Given a 1-nested phylogenetic network $N$ with positive weighted edges and $n$ leaves, and letting $\mathbf{d}_N^R$ be the resistance metric on the $n$ leaves, then the unique associated split network $R_w(N) = \mathcal{N}(\mathbf{d}_N^R)$ displays precisely the same splits as displayed by $N$.
\end{thm}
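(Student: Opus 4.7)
The plan is to exhibit $R_w(N)$ explicitly and appeal to the uniqueness in Lemma~\ref{kalm}. By Theorem~\ref{kal}, $\mathbf{d}_N^R$ is Kalmanson with respect to any cyclic order of $[n]$ induced by an outer planar drawing of $N$, so Lemma~\ref{kalm} supplies a unique weighted circular split system $s$ with $\mathbf{d}_s=\mathbf{d}_N^R$, namely $s=R_w(N)$. It suffices to construct a candidate weighted split system $s'$ whose underlying splits are exactly $\Sigma(N)$ and whose induced metric matches $\mathbf{d}_N^R$; uniqueness will then force $s'=R_w(N)$ and hence $\overline{R_w(N)}=\Sigma(N)$.

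I would build $s'$ using the fact that in a 1-nested network every displayed split has a unique minimal cut---either a single bridge or a pair of edges in a single cycle. For a bridge $e$ (trivial or not) of weight $w_e$, assign weight $w_e$ to the displayed split. For a cycle $C$ of perimeter $W=\sum_{e\in C}w(e)$ and each pair $e_i,e_j\in C$ whose removal displays a split of $N$, assign that split the weight $\frac{w(e_i)\,w(e_j)}{W}$; this matches the numerical example in the caption of Figure~\ref{s795}. All weights are positive, the splits are contiguous in the chosen cyclic order by outer planarity of $N$, and the assignment is unambiguous, because distinct cut pairs in a cycle yield distinct leaf splits---here I rely on the fact that every unlabeled node has degree $>2$, which forces each cycle node to carry attached leaves in its dangling subnetwork.

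The verification $\mathbf{d}_{s'}(a,b)=\mathbf{d}_N^R(a,b)$ proceeds block-by-block along the pairwise circuit $P_{ab}$. Subnetworks attached to $P_{ab}$ at a single cut-point carry no net current and furnish no separating split, contributing zero on both sides. Each bridge of $P_{ab}$ contributes its weight $w_e$ to both sides---series resistor on the left, displayed split on the right. For each cycle $C\subset P_{ab}$ with entry $v_a$, exit $v_b$, and arc weights $P$ and $Q$ (so $P+Q=W$), Ohm's parallel law gives resistance contribution $\frac{PQ}{W}$, while on the split side the splits of $C$ separating $a$ from $b$ are precisely those cutting one edge from each arc, with total $s'$-weight
$$\frac{1}{W}\sum_{e_i\in\text{arc }1}\sum_{e_j\in\text{arc }2} w(e_i)\,w(e_j)=\frac{1}{W}\Bigl(\sum_{e_i\in\text{arc }1}w(e_i)\Bigr)\Bigl(\sum_{e_j\in\text{arc }2}w(e_j)\Bigr)=\frac{PQ}{W}.$$
Summing over blocks of $P_{ab}$ yields $\mathbf{d}_{s'}=\mathbf{d}_N^R$, and Lemma~\ref{kalm} closes the argument.

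The main obstacle I expect is justifying the block decomposition of $\mathbf{d}_N^R$ itself: one must argue carefully via Kirchhoff's laws that no current flows into subnetworks attached to $P_{ab}$ at a single node, and that the cycle $C$ really can be replaced by its parallel-resistor equivalent between the entry and exit nodes, independently of the rest of the network. Once these circuit reductions are in hand, the elementary factoring identity for $PQ$ above and the unambiguity of the cycle weight assignment finish the proof by the uniqueness clause of Lemma~\ref{kalm}.
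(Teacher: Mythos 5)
Your overall strategy is exactly the paper's: construct an explicit weighted split system on $\Sigma(\overline{N})$ with bridge weights $w(e)$ and cycle-pair weights $w(e_i)w(e_j)/W$, verify block-by-block along the pairwise circuit that the induced split metric equals $\mathbf{d}_N^R$ using the factoring identity $\sum_{i,j} w(e_i)w(e_j) = PQ$, and then invoke the uniqueness of the Kalmanson representation. The circuit reductions you flag as the ``main obstacle'' (no current into singly-attached subnetworks, series/parallel reduction of each cycle) are standard and the paper simply asserts them; that is not where the problem lies.

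The genuine gap is your claim that in a 1-nested network every displayed split has a unique minimal cut, and hence that your weight assignment is unambiguous. This is false, and the paper explicitly allows for a split to be displayed ``in more than one way.'' For a concrete counterexample, take a $4$-cycle with a pendant leaf $i$ attached to each cycle node $v_i$: the trivial split $\{1\}\,|\,\{2,3,4\}$ is displayed both by the pendant bridge at $v_1$ and by the pair of cycle edges incident to $v_1$; similarly, for two cycles joined by a bridge, the split carried by that bridge is also displayed by a pair of edges in each of the two cycles. Your two assignment rules then give conflicting weights to the same split. The correct construction (and the one the paper uses) assigns each split the \emph{sum} of the weights of all its distinct displays, $w(A|B)=\sum_{e} w(e) + \sum_{c} w(e_i)w(e_j)/z_c$. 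This summation is not cosmetic: in the $4$-cycle example, $\mathbf{d}_N^R(1,2)$ contains the expanded parallel-resistance term $w_{12}(w_{23}+w_{34}+w_{41})/W$, and the cross terms $w_{41}w_{12}/W$ and $w_{12}w_{23}/W$ are recovered only as the cycle-pair contributions folded into the weights of the trivial splits of leaves $1$ and $2$. With your single-display assignment the constructed metric would not equal $\mathbf{d}_N^R$, and the uniqueness clause of Lemma~\ref{kalm} would then work against you rather than for you. Once the weights are defined as sums over all displays, your verification goes through verbatim and coincides with the paper's proof.
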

\begin{proof} 
 A split $A|B$ can be displayed by $N$ in three possible ways: either it is displayed by a single bridge $e$ with weight $w(e)$, by a pair of edges both in the same cycle $c$ with respective weights $a_c$ and $x_c$, or in more than one way.  Let the weight of a specific display of a split in $N$ be $w(e)$ in the first case and $(a_cx_c)/z_c$ in the second case, where $z_c$ is the sum of all the weights in the cycle. We claim: if the split  $A|B$ in  $\Sigma(\overline{N})$ is assigned the sum of the weights of all distinct displays of that split as displayed in $N$, then the resulting distance metric $\mathbf{d}$ from the weighted split network thus constructed is indeed $\mathbf{d}_N^R$. Therefore we will conclude, since Theorem~\ref{kal} shows that $\mathbf{d}_N^R$ is Kalmanson, that the weighted split network thus constructed is equal to the unique split network corresponding to $\mathbf{d}_N^R$, as found for instance by the algorithm neighbor-net.

 First we check that the claim holds. Consider the pairwise circuit $P_{ij}$ in $N$ for a given pair $i,j$ of leaves. It will be a series of paths and cycles, as seen for example in Figure~\ref{bigp}. 
 \begin{figure}
      \centering
       \includegraphics[width=5in]{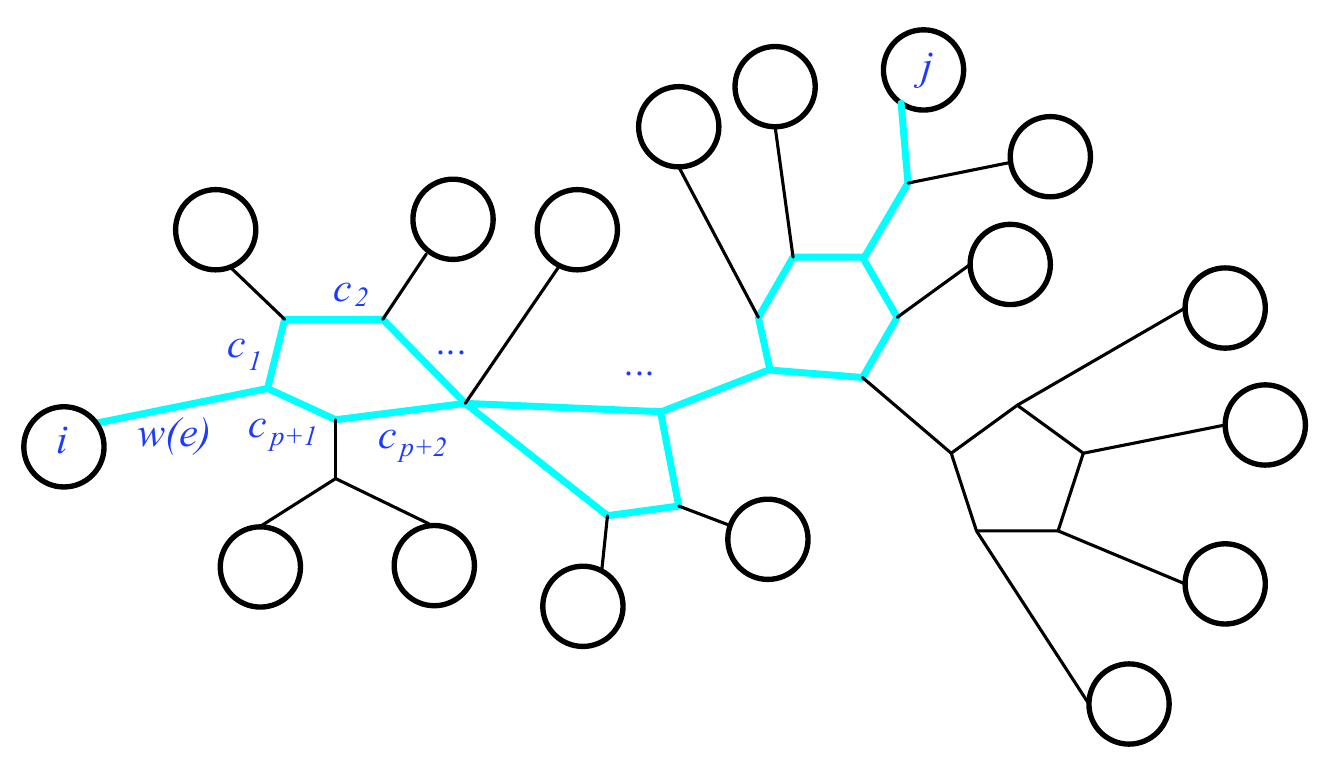}
      \caption{The highlighted subgraph is the pairwise circuit $P_{ij}.$}
      \label{bigp}
  \end{figure}
  Thus each cycle $c$ in $P_{ij}$ will be split into two circuit-parallel paths $p_c$ and $q_c$ of respective lengths $p,q$. Both paths begin and end at the two nodes where that cycle is attached to the rest of the series. Now the resistance distance $\mathbf{d}_N^R(i,j)$ will be the sum of the weights of the (non-circuit-parallel) paths, and of the effective resistances of the circuit-parallel paths. Specifically, every weighted edge of $P_{ij}$ not in a cycle will contribute its weight to the sum, and every weighted edge in a cycle of $P_{ij}$ will appear in one of two factors in the numerator of the term giving the effective resistance from those circuit-parallel paths. We see that
 
 $$\mathbf{d}_N^R(i,j) = \sum_{e\in P_{ij}}w(e) +\sum_{c\in P_{ij}}\frac{(c_1+\dots+c_p)(c_{p+1}+\dots+c_{p+q})}{c_1+\dots +c_{p+q}} = \sum_{e\in P_{ij}}w(e) +\sum_{c\in P_{ij}}\sum_{c_m \in p_c \atop c_r\in q_c}\frac{c_mc_r}{z_c}  $$ 
 
 where $c_1,\dots,c_p$ and $c_{p+1},\dots,c_{p+q}$ are the weights of the circuit-parallel paths of cycle $c \in P_{ij}$, with $z_c = c_1+\dots+c_{p+q}$ being the total weight of $c.$ That is, we expand the numerator of each term from a cycle. Now, the distance metric corresponding to the weighted split network we constructed using $\Sigma(\overline{N})$ has distance 
 
 $$\mathbf{d}(i,j)= \sum_{A|B \in N \atop i\in A, j\in B} w(A|B) $$ 
 Now splits in $N$, and thus in $\Sigma(\overline{N}),$ which separate leaves $i,j$ are precisely those displayed by a bridge in $P_{ij}$ or by a pair of circuit-parallel edges in  a cycle of $P_{ij}$. 
 Thus using the weights for splits (as stated above): 
 
 $$w(A|B)= \sum_{A|B \text{ disp. by } e} w(e) + \sum_{A|B \text{ disp. by } \atop c_m,c_r \in c} \frac{c_mc_r}{z_c}$$  in the split metric, gives us the desired claim: $\mathbf{d} = \mathbf{d}_N^R$.

 Then we conclude that since the  weighted circular split network associated to the original Kalmanson metric $\mathbf{d}_N^R$ is the unique such network where the split metric equals the original Kalmanson metric, then $\mathcal{N}(\mathbf{d}_N^R)$ will have precisely the splits of $N$ and thus of $\Sigma(\overline{N}).$ 
\end{proof}

Remark: The fact that we can take a weighted 1-nested phylogenetic network $N$ and build a weighted circular split network $s$ which has the same metric, $\mathbf{d}_s = \mathbf{d}_N^R$, implies another proof that the resistance distance is Kalmanson. Since the circular split network is planar, and the split metric on it is the same as the minimum path network on it, that metric is guaranteed to be Kalmanson. However, our original proof has the advantage that we see which of the inequalities are strict, and which are actually equalities.

The first important implication of these theorems is that the resistance distance on any 1-nested phylogenetic network $N$ is precisely represented by a unique circular split network $\mathcal{N}(\mathbf{d}_N^R)$. Exactly all the splits displayed by the original $N$ are present in $\mathcal{N}(\mathbf{d}_N^R)$. Thus the function $L$ applied to the unweighted version of $\mathcal{N}(\mathbf{d}_N^R)$ returns the unweighted version of $N$ itself.

\begin{thm}\label{net}
  Given weighted 1-nested $N$, we have that $\overline{\mathcal{N}(\mathbf{d}_N^R)} = \Sigma(\overline{N}).$ Thus $L(\overline{\mathcal{N}(\mathbf{d}_N^R))} = \overline{N}.$
\end{thm}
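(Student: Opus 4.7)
The plan is to chain together Theorem~\ref{tsplits}, Lemma~\ref{kalm}, and the identity $L\circ\Sigma = \mathrm{id}$ stated in Remark~\ref{reflection}. The first equality $\overline{\mathcal{N}(\mathbf{d}_N^R)} = \Sigma(\overline{N})$ is essentially a bookkeeping consequence of the main work already done: Theorem~\ref{kal} guarantees that $\mathbf{d}_N^R$ is a Kalmanson metric, so Lemma~\ref{kalm} attaches to it a unique weighted circular split system, which neighbor-net recovers as $\mathcal{N}(\mathbf{d}_N^R)$. Theorem~\ref{tsplits} identifies the splits in this recovered system as precisely those displayed by $N$, and by Definition~\ref{sig} those are exactly the splits comprising $\Sigma(\overline{N})$.

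First I would spell out the uniqueness argument carefully. Fix a cyclic order of $[n]$ consistent with $N$; by the proof of Theorem~\ref{kal} this is a Kalmanson order for $\mathbf{d}_N^R$. Lemma~\ref{kalm} then asserts that there is a unique weighted circular split system $s$, with all parts contiguous in this order, whose split metric equals $\mathbf{d}_N^R$; this $s$ is by definition $\mathcal{N}(\mathbf{d}_N^R) = R_w(N)$. On the other hand, the construction in the proof of Theorem~\ref{tsplits} built an explicit weighted circular split system supported on $\Sigma(\overline{N})$ whose split metric reproduces $\mathbf{d}_N^R$. Uniqueness therefore forces this constructed system to coincide with $\mathcal{N}(\mathbf{d}_N^R)$ as a weighted split system. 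Forgetting weights on both sides then yields $\overline{\mathcal{N}(\mathbf{d}_N^R)} = \Sigma(\overline{N})$.

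The second equality is then immediate: Remark~\ref{reflection} records that $L\circ\Sigma$ is the identity map on unweighted 1-nested phylogenetic networks, so
\[
L\!\left(\overline{\mathcal{N}(\mathbf{d}_N^R)}\right) \;=\; L\!\left(\Sigma(\overline{N})\right) \;=\; \overline{N}.
\]

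I do not expect a genuine obstacle in this argument, because the substantive content has already been extracted into Theorems~\ref{kal} and~\ref{tsplits}. The only subtlety worth being explicit about is that the statement of Theorem~\ref{tsplits} is about the set of splits displayed, whereas the conclusion $\overline{\mathcal{N}(\mathbf{d}_N^R)} = \Sigma(\overline{N})$ is about unweighted split networks as graphs; ensuring these two notions agree is exactly the content of Lemma~\ref{kalm} (a split system plus a Kalmanson order determines the circular split network up to the usual twisting equivalences), which is why the uniqueness step above should be written out rather than invoked as folklore.
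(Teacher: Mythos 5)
Your argument is correct and follows the same route as the paper: the first equality is deduced from Theorem~\ref{tsplits} together with the uniqueness of the circular split system attached to a Kalmanson metric, and the second from the identity $L\circ\Sigma = \mathrm{id}$ recorded in Remark~\ref{reflection}. Your version merely spells out the uniqueness step (which the paper's proof of Theorem~\ref{tsplits} already carries) more explicitly, which is a reasonable clarification but not a different proof.
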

\begin{proof}
 The first equality follows directly from Theorem~\ref{tsplits}, since neighbor-net is guranteed to output the splits of the unique circular split network associated to the Kalmanson metric given by the resistance distance, which is indeed all the splits displayed by the network $N$. Then  from \cite{scalzo}, we have the second equality since $L\circ \Sigma$ is shown there to be the identity map. 
\end{proof}

The first application implied by this result is that when using neighbor-net on a measured distance matrix, if we assume that it reflects a resistance distance, we can always recover the form of the original network. The weights of splits in the result of neighbor net are interesting, they are in fact terms in the expansion of the calculated resistance distance. However, the first advantage we see is that the original unweighted phylogenetic network can be directly recovered by taking the exterior of the result of neighbor-net.

As an alternative to neighbor-net, there are polytopes which can serve as the domain for linear programming that finds  the best-fit 1-nested phylogenetic network for a measured distance matrix.

\section{Resistance distance and  polytopes}\label{sec_poly}
In \cite{durell} the authors describe a new family of polytopes. This family lies between the Symmetric Travelling Salesman Polytope (STSP($n$)) and the Balanced Minimum Evolution Polytope (BME($n$)). Our polytopes are called the level-1 network polytopes BME($n,k$) for $0\le k \le n-3$. All have  dimension ${n \choose 2} - n.$  In \cite{scalzo} we looked at implications of the Galois connections studied there for these polytopes, especially using $S_w$, the function based on minimum path distance.  It turns out that if we assume an input distance metric represents the resistance distance on a 1-nested phylogenetic network $N$, then the result of neighbor-net or of linear programming on a BME polytope is a network accurately showing all the splits of $N$. Also,  neighbor-net is statistically consistent, as shown in \cite{Bryant2007}. Therefore as a measured set of pairwise distances approach the resistance distance of $N$, the output of neighbor-net will approach the faithfully phylogenetic circular split network $N$.  This is in contrast to minimum path distance where some genetic connections are assumed to be negligible, and then are lost in the output of neighbor-net. However, the theorems about minimum path distance, specifically Theorems 8, 9 and 11 of \cite{durell}, play an important role in the proof of Theorem~\ref{face} here. Here we repeat some of the same introductory definitions and remarks and then extend the results to resistance distance.

\begin{definition}\label{e:bmenkvert} For a binary, 1-nested phylogenetic network $N$, (weighted or unweighted) the vector ${\mathbf x}(N)$ is defined to have lexicographically ordered components ${x}_{ij}(N)$ for each unordered pair of distinct leaves $i,j \in [n]$ as follows:

$$ {x}_{ij}(N) = \begin{cases} 2^{k-b_{ij}} & \text{if there exists cyclic order $c$ consistent with $N$; with $i,j$  adjacent in $c$,}\\ 0 & \text{otherwise.} \end{cases}
$$

where $k$ is the number of bridges in $N$ and $b_{ij}$ is the number of bridges traversed in a path from $i$ to $j$. For example, see Figure~\ref{srx}.
 \end{definition}
The  convex  hull  of  all  the   ${\mathbf x}(N)$ such that binary $N$ has $n$ leaves and $k$ nontrivial bridges is the level-1 network polytope BME($n,k$). As shown in \cite{durell}, the vertices of BME($n, k$) are precisely the vectors ${\mathbf x}(N)$ for $N$ binary with $n$ leaves and $k$ nontrivial bridges. In light of Theorems~\ref{kal} and~\ref{tsplits}, we can now characterize the vertices in terms of resistance distance:
\begin{thm}\label{vert}
Every 1-nested phylogenetic network found as an image $L(\overline{R_w(N)})$ gives rise to a face of BME($n,k$) for some $k.$ In particular, the vertices of the polytopes $BME(n,k)$ correspond to images  $L(\overline{R_w(N)})$ which exhibit $k$ non-trivial bridges, for weighted 1-nested networks $N$ with $n$ leaves and such that any node not in a cycle has degree three. 
\end{thm}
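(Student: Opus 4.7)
The plan is to exploit Theorem~\ref{net}, which gives $L(\overline{R_w(N)}) = \overline{N}$. This reduces the problem to characterizing which unweighted 1-nested phylogenetic networks $\overline{N}$ correspond to vertices, versus larger faces, of BME$(n,k)$.

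For the vertex statement, I would argue directly from Definition~\ref{e:bmenkvert} together with the cited vertex characterization from \cite{durell}: the vertices of BME$(n,k)$ are exactly the vectors $\mathbf{x}(N')$ for binary 1-nested networks $N'$ on $n$ leaves with $k$ nontrivial bridges. In a 1-nested network, being binary (every unlabeled node trivalent) is equivalent to requiring every non-cycle unlabeled node to have degree $3$, since every cycle node already carries two cycle-adjacent edges and, under the binary assumption, exactly one further incident edge. Hence when $N$ satisfies the hypothesis of the theorem, $\overline{N}$ is binary with some number $k$ of nontrivial bridges, and $\mathbf{x}(\overline{N})$ is the corresponding vertex of BME$(n,k)$.

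For the more general face statement, given an arbitrary image $M = L(\overline{R_w(N)}) = \overline{N}$, I would form the set $\mathcal{B}(M)$ of all binary 1-nested refinements of $M$: each non-cycle unlabeled node of degree $d \ge 4$ is resolved into a trivalent tree introducing $d-3$ new trivial-bridge edges, while the cyclic structure is preserved. All refinements share the same bridge count $k$, so the vertices $\{\mathbf{x}(N') : N' \in \mathcal{B}(M)\}$ lie in a common polytope BME$(n,k)$. Their convex hull is the candidate face, and I would verify it is genuinely a face by exhibiting a supporting linear functional. The natural choice is the BME functional $\mathbf{x} \mapsto \mathbf{x}\cdot\mathbf{d}_N^R$: by the explicit split-expansion of $\mathbf{d}_N^R$ in the proof of Theorem~\ref{tsplits}, the positively weighted splits in that decomposition are exactly those displayed by $\overline{N}$. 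Every $N' \in \mathcal{B}(M)$ displays all of these splits (with the extra refining splits contributing zero weight), while any vertex outside $\mathcal{B}(M)$ corresponds to a binary network failing to display some split of $\overline{N}$ and so incurs a strictly positive excess in $\mathbf{x}\cdot\mathbf{d}_N^R$.

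The main obstacle will be verifying that this BME functional genuinely discriminates in the claimed way. The analogous statement for minimum-path distance is provided by Theorems~8, 9, and~11 of \cite{durell}, but transferring those arguments to resistance distance is delicate because the edges of each cycle of $\overline{N}$ are coupled through the ratio $a_c x_c / z_c$ assigned to each cycle-displayed split. I expect the cleanest route is to run the \cite{durell} optimization on the split network $R_w(N)$ directly---where each split carries an independently assigned weight---and then transport the conclusion back to $\overline{N}$ via Theorem~\ref{tsplits}, recovering the desired face of BME$(n,k)$.
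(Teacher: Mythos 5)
Your overall strategy matches the paper's: reduce via Theorem~\ref{net}, invoke the vertex characterization of BME($n,k$) from \cite{durell} for the ``in particular'' clause, and handle the general face statement by the functional $\mathbf{x}\mapsto\mathbf{x}\cdot\mathbf{d}_N^R$ routed through the split network $R_w(N)$ (the paper in fact defers that part to Theorem~\ref{face}, whose proof is exactly your proposed reduction $\mathbf{x}(N)\cdot\mathbf{d}_N^R=\mathbf{x}(N)\cdot\mathbf{d}_{N'}$ with $N'=L_w(R_w(N))$, followed by Theorems 8, 9, 11 of \cite{durell}). However, there is a genuine gap in your vertex argument. You assert that for a 1-nested network, being binary is equivalent to every non-cycle unlabeled node having degree $3$, ``since every cycle node already carries two cycle-adjacent edges and, under the binary assumption, exactly one further incident edge.'' That equivalence is false for a general 1-nested network, and the justification is circular (it assumes binarity to conclude it). A node lying \emph{in} a cycle is unconstrained by the hypothesis on non-cycle nodes, and it can have degree $4$ or more: two cycles of a 1-nested network may share a cut-point node (four cycle edges at that node, each edge still in only one cycle), or a single cycle node may carry two or more pendant bridges. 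In such cases $\overline{N}$ is not binary even though all its non-cycle nodes are trivalent, so $\mathbf{x}(\overline{N})$ need not be a vertex on your reading.

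The missing step --- which is the actual content of the paper's short proof --- is that the theorem is about $L(\overline{R_w(N)})$, the maximal-bridge representative of the split-equivalence class of $\overline{N}$, not about an arbitrary drawing of $\overline{N}$. By Theorem~\ref{tsplits}, $\overline{R_w(N)}$ is faithfully phylogenetic, i.e.\ lies in the range of $\Sigma$, and the representative split networks in that range attach a bridge at every cycle node (the marguerite construction), separating each cycle from everything else. Consequently every cycle node of $L(\overline{R_w(N)})$ has degree exactly $3$ automatically, and the only degrees left to control are those of the non-cycle nodes --- which is precisely why the theorem's hypothesis mentions only them. Once you insert this observation, your identification of the vertices goes through, and your construction of $\mathcal{B}(M)$ for the face statement (resolving only non-cycle nodes of degree $\ge 4$, all refinements sharing a common bridge count $k$) is then legitimate rather than implicitly assuming trivalence of cycle nodes.
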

 \begin{proof} The image $R_w(N)$ will faithfully represent all splits, as seen in  Theorem~\ref{tsplits}. Thus $\overline{R_w(N)}$ will be faithfully phylogenetic, in the range of $\Sigma.$ Specifically,  the function $R_w$ will introduce bridges that separate all cycles, thus insuring that any node in a cycle will have degree three.  Therefore if the non-cycle nodes of $N$ are degree three, $L(\overline{R_w(N)})$ will be a binary unweighted 1-nested phylogenetic network.
 \end{proof}
Also as shown in \cite{durell} and repeated in \cite{scalzo}, an equivalent definition of the vector $\mathbf{x}(N)$ is the vector sum of the vertices of the STSP($n$) which correspond to cyclic orders consistent with $N$. Recall that the vertices of STSP($n$) are the incidence vectors $\mathbf{x}(c)$ for each cyclic order $c$ of $n$, where the $i,j$ component is 1 for $i$ and $j$ adjacent in the order $c$, 0 otherwise. This equivalent definition for binary 1-nested phylogenetic networks may also be applied to any 1-nested phylogenetic network:
\begin{lemma}\label{cyc}
For a 1-nested phylognetic network $N$, the vector $\mathbf{x}(N)$ is equal to $\sum_c \mathbf{x}(c)$ where the sum is over all cyclic orders $c$ of $[n]$ consistent with $N.$ 
\end{lemma}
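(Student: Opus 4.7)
The plan is to reduce the lemma to a pair-adjacency counting identity. By definition of $\mathbf{x}(c)$ (the incidence vector of the cyclic order as a Hamiltonian cycle), the $(i,j)$-entry of $\sum_c \mathbf{x}(c)$ equals the number of cyclic orders $c$ consistent with $N$, viewed as dihedral equivalence classes (that is, as vertices of STSP($n$)), in which the leaves $i$ and $j$ appear consecutively. Matching this count with $x_{ij}(N) = 2^{k-b_{ij}}$ (or $0$) from Definition~\ref{e:bmenkvert} then finishes the binary case, and for a general 1-nested $N$ the lemma's formula serves as the defining extension of $\mathbf{x}(N)$, as hinted by the paragraph preceding the lemma.

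For the binary case I would proceed by induction on the number $k$ of non-trivial bridges of $N$. Write $A_N(i,j)$ for the number of consistent cyclic orders of $N$ in which $i,j$ appear consecutively; the goal is to show that $A_N(i,j) = 2^{k - b_{ij}}$ when $i,j$ are adjacent in at least one consistent cyclic order, and $0$ otherwise. The base case $k = 0$ is handled directly: a binary 1-nested network with no non-trivial bridges is either $K_{1,3}$ or a single cycle of length $\ge 4$ with a labeled leaf at every cycle vertex, and each admits a unique consistent cyclic order up to dihedral equivalence, so the count is immediate.

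For the inductive step, pick a non-trivial bridge $e$ with endpoints $x, y$, and let $N_1, N_2$ be the components obtained by removing $e$. Append a new labeled leaf $u$ at $x$ (resp.\ $v$ at $y$) to form binary 1-nested networks $N_1+u$ and $N_2+v$ with $k_1, k_2$ non-trivial bridges, so $k = k_1 + k_2 + 1$. A natural gluing correspondence assembles consistent cyclic orders of $N_1+u$ and $N_2+v$ into consistent cyclic orders of $N$: delete $u$ from its cyclic order and splice in the sequence of $N_2+v$ starting from the successor of $v$. Adjacency counts split into two cases. When $i,j$ both lie in, say, $N_1$, the count $A_N(i,j)$ equals $A_{N_1+u}(i,j)$ times the total number of cyclic orders of $N_2+v$. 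When $i,j$ straddle $e$, the count equals the product $A_{N_1+u}(i,u)\cdot A_{N_2+v}(j,v)$, since $i,j$ are consecutive in the glued order precisely when $i$ is adjacent to $u$ in $N_1+u$ and $j$ is adjacent to $v$ in $N_2+v$ on matching sides. Applying the inductive hypothesis, and using $b_{ij} = b_{iu} + 1 + b_{jv}$ in the straddle case, both evaluations collapse to $2^{k - b_{ij}}$.

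The hardest part will be the careful bookkeeping of directed versus dihedral cyclic orders in the gluing step: the product of two dihedral classes on the two sides is \emph{not} itself a dihedral class on $N$ without a corrective factor of two coming from the residual reversal symmetry, and a short case analysis of how reversal acts diagonally on the product resolves this subtlety. For a non-binary 1-nested $N$ the lemma's formula $\mathbf{x}(N) = \sum_c \mathbf{x}(c)$ extends the definition of $\mathbf{x}(N)$ to the full class of 1-nested networks, so no further argument is needed.
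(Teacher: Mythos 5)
The paper itself offers no proof of this lemma: the equivalence is attributed to \cite{durell} (``as shown in \cite{durell} and repeated in \cite{scalzo}''), and for non-binary $N$ the sum over consistent cyclic orders is simply adopted as the \emph{definition} of $\mathbf{x}(N)$. Your reading of that framing is correct, and necessarily so --- for the star on four leaves every pair is adjacent in two of the three consistent cyclic orders, so $\sum_c\mathbf{x}(c)$ has every entry equal to $2$ while $2^{k-b_{ij}}=1$; hence Definition~\ref{e:bmenkvert} cannot be extended verbatim and only the binary case carries content. Your bridge-induction is therefore a genuine self-contained argument where the paper gives only a citation, and its skeleton is sound, but two points must be nailed down before it closes. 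First, the counts displayed in your middle paragraph are literally off by a factor of $2$: the correct bijection is between consistent dihedral orders of $N$ and pairs of consistent orders of $N_1+u$ and $N_2+v$ \emph{together with} a choice of splice orientation, so for $i,j$ both in $N_1$ one gets $A_N(i,j)=2\cdot A_{N_1+u}(i,j)\cdot|C(N_2+v)|=2\cdot 2^{k_1-b_{ij}}\cdot 2^{k_2}=2^{k-b_{ij}}$, whereas in the straddle case exactly one of the two orientations places $i$ next to $j$, the factor of $2$ is absorbed, and $A_N(i,j)=2^{k_1-b_{iu}}\cdot 2^{k_2-b_{jv}}=2^{k-b_{ij}}$ via $b_{ij}=b_{iu}+1+b_{jv}$. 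You flag this in your closing paragraph, but as written the middle paragraph does not yet ``collapse to $2^{k-b_{ij}}$''; the corrected identities above are what actually do, and they should replace the uncorrected ones. Second, the identity $k=k_1+k_2+1$ silently assumes no non-trivial bridge of $N$ lying in $N_1$ becomes trivial in $N_1+u$; this could fail only if some bridge other than $e$ displayed the same split as $e$, which a short degree and edge count rules out for binary triangle-free $1$-nested networks (a leafless block between two such bridges would need degree-$3$ unlabeled nodes but too few edges to house edge-disjoint cycles of length at least four), and that observation deserves a sentence. With those two repairs the induction goes through and gives a proof the paper does not supply.
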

We point out, for the sake of attribution, that for phylogenetic trees  $t$ (with nodes of any degree), Lemma~\ref{cyc} with $N=t$ gives a formula for $\mathbf{x}(t)$ that agrees with the definition of the coefficient $n_t$ in \cite{Steel}, in the proof of Theorem 4.2 of that paper.


\begin{figure}
    \centering
    \includegraphics[width=5in]{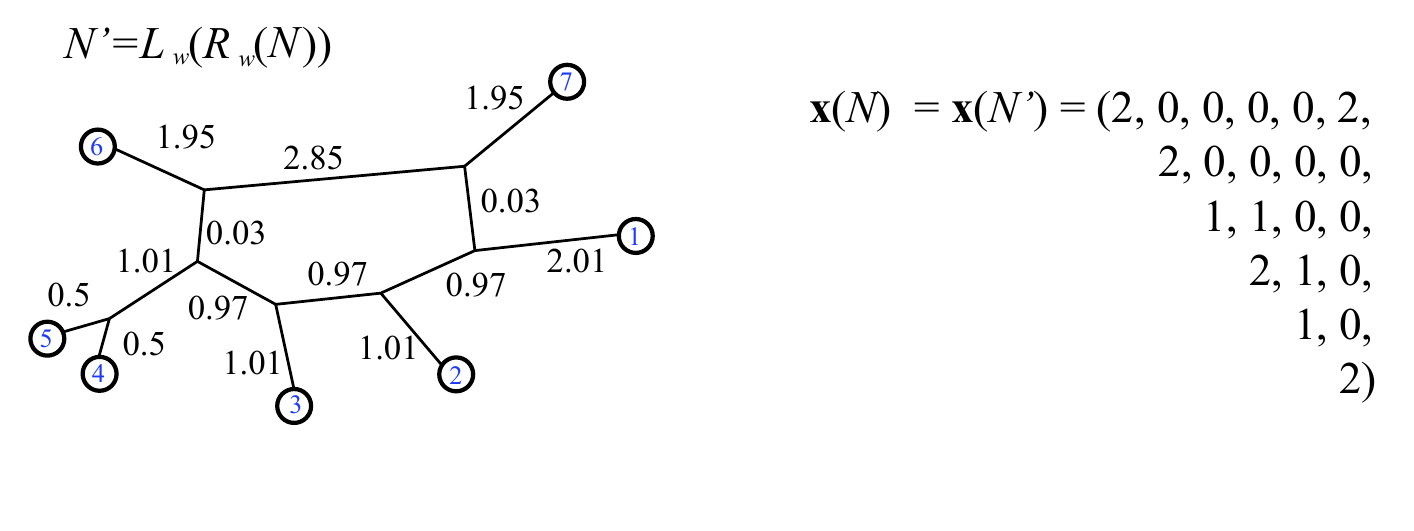}
    \caption{Using $N$ from Figure~\ref{s795}, we find $N'$ as in the proof of Theorem~\ref{face}. In the vector $\mathbf{x}(N)$ the first component is $x_{1,2} = 2^{1-0},$ since there are no non-trivial bridges traversed between leaves 1 and 2. As well, there are two consistent circular orders, with leaves 1 and 2 adjacent, found by twisting around the single non-trivial bridge.  The 19th entry is $x_{5,6} = 2^{1-1},$ since the path between leaves 5 and 6 traverse the non-trivial bridge. Here the minimum path distance vector is: $$\mathbf{d}_N = (3.99, 4.96, 6.43, 6.43, 6.84, 3.99,$$ 
    $$\hspace{.855in}2.99, 4.46, 4.46, 4.93, 3.96,$$
    $$\hspace{.5in}\hspace{.75in}3.49, 3.49, 3.96, 4.93,$$ $$\hspace{.5in}\hspace{.5in}\hspace{.75in}1, ~3.49, 6.34,$$ $$\hspace{.5in}\hspace{.5in}\hspace{.5in}\hspace{.5in}3.49, 6.34,$$ 
    $$\hspace{1.75in}\hspace{.45in}\hspace{.25in}6.75 ).$$}
    \label{srx}
\end{figure}

In \cite{scalzo} it is shown that the minimum path distance vector for a 1-nested phylogenetic network may be seen as a linear functional, and that it is minimized over the BME($n,k$) polytope. Specifically,
\begin{thm}\label{newth}
 Given any weighted 1-nested phylogenetic network ${N}$ with $n$ leaves, the product $\mathbf{x}(\hat{N})\cdot \mathbf{d}_{{N}}$ is minimized over BME($n,k$)  precisely for the unweighted binary 1-nested networks $\hat{N}$ with $k$ bridges such that $\overline{S_w({N})} \le \Sigma(\hat{N})$.\\
\end{thm}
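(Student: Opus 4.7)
The plan is to decompose via Lemma~\ref{cyc}, apply the classical Kalmanson--TSP equivalence termwise, and translate the equality condition into a split-system inclusion.

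First, I would use Lemma~\ref{cyc} to rewrite
\[
\mathbf{x}(\hat{N})\cdot \mathbf{d}_{N} \;=\; \sum_{c}\mathbf{x}(c)\cdot \mathbf{d}_{N},
\]
where $c$ ranges over the cyclic orders of $[n]$ consistent with $\hat{N}$. For binary 1-nested $\hat{N}$ with $k$ non-trivial bridges, these consistent orders number $2^k$, obtained by independently twisting around each non-trivial bridge; each summand $\mathbf{x}(c)\cdot \mathbf{d}_N$ is the Hamiltonian tour length on the leaves in cyclic order $c$ under the distance $\mathbf{d}_N$.

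Second, I would invoke the Kalmanson structure. From \cite{scalzo}, $\mathbf{d}_N$ is Kalmanson and $S_w(N)$ is the unique weighted circular split network with $\mathbf{d}_{S_w(N)}=\mathbf{d}_N$. A short split-counting argument---each split $A|B$ is crossed exactly twice by any tour consistent with $\overline{S_w(N)}$---shows that every such consistent cyclic order yields the same tour length $2\sum_{A|B}w_{S_w(N)}(A|B)$, while the strict Kalmanson inequality applied to each ``bad'' quadruple forces a strictly greater value for any non-consistent order. Hence every summand $\mathbf{x}(c)\cdot\mathbf{d}_N$ is bounded below by this common Kalmanson--TSP minimum, with equality if and only if $c$ is consistent with $\overline{S_w(N)}$.

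Third, adding these $2^k$ inequalities shows that $\mathbf{x}(\hat{N})\cdot\mathbf{d}_N$ is minimized over $\mathrm{BME}(n,k)$ precisely when every cyclic order consistent with $\hat{N}$ is also consistent with $\overline{S_w(N)}$. A cyclic order is consistent with a circular split system exactly when it is consistent with each of that system's splits, so this universal consistency is equivalent to $\Sigma(\hat{N}) \supseteq \overline{S_w(N)}$, that is, $\overline{S_w(N)} \le \Sigma(\hat{N})$. Because $\mathbf{d}_N$ is a linear functional, its minimum on $\mathrm{BME}(n,k)$ is attained on a face whose vertices are exactly those $\mathbf{x}(\hat{N})$ meeting this condition.

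The hard part will be the Kalmanson--TSP claim in the second step---specifically, proving that non-consistent cyclic orders give strictly larger tour length. The equal value among compatible tours follows immediately from the split-weight decomposition, but strict optimality of compatible orders requires a greedy/exchange argument that rewrites any non-consistent order as a sequence of adjacent transpositions each paying a strictly positive Kalmanson deficit. A minor bookkeeping item is confirming the $2^k$ count of consistent orders for binary 1-nested $\hat{N}$ with $k$ non-trivial bridges, which uses that cycles in a binary 1-nested network admit no further reordering of their incident leaves beyond the global twists around bridges.
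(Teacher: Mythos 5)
You should first note that the paper does not actually prove Theorem~\ref{newth}: it is imported verbatim from \cite{scalzo}, and the related facts the paper later leans on (in the proof of Theorem~\ref{face}) are cited as Theorems 8, 9 and 11 of \cite{durell}. So the comparison here is with the strategy of those sources, and your strategy --- decompose $\mathbf{x}(\hat{N})$ into Hamiltonian tour vectors via Lemma~\ref{cyc}, bound each summand by the Kalmanson travelling-salesman minimum, and convert the equality condition into a refinement of split systems --- is exactly the intended one; the paper's own presentation of Lemma~\ref{cyc} and of BME$(n,k)$ as sitting between STSP$(n)$ and BME$(n)$ is designed for this argument. Two of your worries are in fact easier than you suggest: the strict inequality for non-consistent tours does not need an exchange argument, since $\mathbf{x}(c)\cdot\mathbf{d}_{S_w(N)}=\sum_{A|B}w(A|B)\cdot\kappa_c(A|B)$ where $\kappa_c(A|B)$ is the (even, $\ge 2$) number of adjacent pairs of $c$ separated by $A|B$, and $\kappa_c(A|B)=2$ exactly when both parts are contiguous in $c$; and the uniform count of $2^k$ consistent orders per vertex of BME$(n,k)$ (which you correctly identify as necessary for comparing different vertices) is forced by the rigidity of cycles in a binary 1-nested network.

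The genuine gap is in your third step. Your justification --- ``a cyclic order is consistent with a circular split system exactly when it is consistent with each of its splits'' --- only gives the easy implication, that $\overline{S_w(N)}\le\Sigma(\hat{N})$ forces every order consistent with $\hat{N}$ to be consistent with $\overline{S_w(N)}$, hence that refining networks attain the minimum. The word ``precisely'' in the theorem requires the converse: if a split $A|B$ of $\overline{S_w(N)}$ has both parts contiguous in \emph{every} cyclic order consistent with $\hat{N}$, then $A|B$ must actually be displayed by $\hat{N}$, i.e.\ $A|B\in\Sigma(\hat{N})$. Equivalently, you need that $\Sigma(\hat{N})$ equals the intersection, over all consistent orders $c$ of $\hat{N}$, of the full circular split systems of those orders. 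This is a structural fact about 1-nested networks (essentially from \cite{gambette-huber}, and part of the Galois-connection machinery of \cite{scalzo}); it is not a definition-level tautology, and without it your argument only shows that the refining $\hat{N}$ lie in the minimizing face, not that they exhaust its vertex set. You should either prove this lemma (an explicit twist around a suitable bridge or cycle separating a non-displayed split produces a consistent order in which that split is not contiguous) or cite it.
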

Here $\hat{N}$ is used to denote a variable binary 1-nested phylogenetic network, taking values from the set of networks which refine $\overline{S_w({N})}$.  By this refinement we mean taking values from the set of networks with a superset of the set of splits displayed by $\overline{S_w({N})}.$
Now we can extend that result to resistance distances. In fact it becomes stronger: binary networks can be directly recovered even when they have long edges, since the action of $R_w$ preserves all splits. Precisely, we have:
\begin{thm}\label{face}
The minimum of $\mathbf{x}(N)\cdot\mathbf{d}_N^R$ is achieved at the face of BME($n,k$) with vertices $\mathbf{x}(\hat{N}),$ for unweighted binary networks  $\hat{N}$  with $k$ bridges such that $\hat{N}$ refines $\overline{N}.$
\end{thm}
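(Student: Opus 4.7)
The plan is to reduce the theorem to Theorem~\ref{newth} (the analogous statement for minimum path distance) by exhibiting an auxiliary weighted 1-nested phylogenetic network whose minimum path distance equals the resistance distance $\mathbf{d}_N^R$ and whose unweighted form equals $\overline{N}$. Once such a network is in hand, the conclusion is a direct translation of Theorem~\ref{newth}, and the characterization of the optimal face as the convex hull of optimal vertices is standard for linear programming on polytopes.

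Concretely, I would set $N' := L_w(R_w(N))$, which is well-defined because Theorem~\ref{kal} together with Lemma~\ref{kalm} guarantees that $R_w(N)$ is a weighted circular split network, and $L_w$ then returns a weighted 1-nested phylogenetic network. The first claim to verify is that $\overline{N'} = \overline{N}$. By Lemma~\ref{comm}, $\overline{N'} = L(\overline{R_w(N)})$; by Theorem~\ref{tsplits}, $\overline{R_w(N)} = \Sigma(\overline{N})$; and by Remark~\ref{reflection}, $L\circ \Sigma$ is the identity, so $\overline{N'} = L(\Sigma(\overline{N})) = \overline{N}$.

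The second claim is that $\mathbf{d}_{N'} = \mathbf{d}_N^R$. By the defining property of $R_w$, the split metric of $R_w(N)$ equals $\mathbf{d}_N^R$. Because $\overline{R_w(N)} = \Sigma(\overline{N})$ is faithfully phylogenetic, the standard marguerite-based representation ensures that $R_w(N)$ admits a drawing in which all shortest paths between leaves lie on the exterior, i.e., $R_w(N)$ is outer-path. The same argument used in Lemma~\ref{lemw} (smoothing interior edges away preserves the distance vector in the outer-path case) then gives $\mathbf{d}_{N'} = \mathbf{d}_{L_w(R_w(N))} = \mathbf{d}_{R_w(N)} = \mathbf{d}_N^R$. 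Equivalently, $S_w(N') = R_w(N)$, since both are the unique weighted circular split network associated to the same Kalmanson metric.

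With $N'$ thus constructed, I would apply Theorem~\ref{newth} with $N'$ in the role of $N$: the minimum of $\mathbf{x}(\hat N)\cdot \mathbf{d}_{N'}$ over $\mathrm{BME}(n,k)$ is achieved precisely at vertices $\mathbf{x}(\hat N)$ for binary $\hat N$ with $k$ bridges such that $\overline{S_w(N')} \le \Sigma(\hat N)$. Substituting $\mathbf{d}_{N'} = \mathbf{d}_N^R$ and $\overline{S_w(N')} = \overline{R_w(N)} = \Sigma(\overline{N})$, this becomes: the minimum of $\mathbf{x}(\hat N)\cdot \mathbf{d}_N^R$ is achieved precisely where $\Sigma(\overline{N}) \le \Sigma(\hat N)$. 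Since $\Sigma$ records exactly the displayed splits, this is equivalent to $\hat N$ refining $\overline{N}$, giving the theorem; the optimal face is then the convex hull of those optimal vertices. The main obstacle is the outer-path check for $R_w(N)$ needed to invoke the distance-preserving step of Lemma~\ref{lemw} — but Theorem~\ref{tsplits} delivers the faithfully phylogenetic structure of $\overline{R_w(N)}$ essentially for free, and the rest is bookkeeping through the Galois-type identities $L\circ\Sigma = \mathrm{id}$ and $\overline{L_w(s)} = L(\overline s)$.
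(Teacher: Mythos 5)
Your overall strategy --- reduce to Theorem~\ref{newth} via the auxiliary network $N'=L_w(R_w(N))$ --- is the same as the paper's, and your first claim, $\overline{N'}=\overline{N}$ (via Lemma~\ref{comm}, Theorem~\ref{tsplits} and $L\circ\Sigma=\mathrm{id}$), is correct. But your second claim, $\mathbf{d}_{N'}=\mathbf{d}_N^R$, is false, and the step you use to obtain it --- that $\overline{R_w(N)}$ being faithfully phylogenetic forces $R_w(N)$ to be outer-path --- does not hold. The paper points this out explicitly right after the theorem: for the network of Figure~\ref{s795} one has $\mathbf{d}_{N'}\ne\mathbf{d}_N^R$ (compare the entries $6.41$ in Figure~\ref{s795} with $6.43$ in Figure~\ref{srx}), even though $\mathbf{x}(N)\cdot\mathbf{d}_{N'}=\mathbf{x}(N)\cdot\mathbf{d}_N^R$. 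The reason is that for two leaves attached to a cycle at non-adjacent points, the shortest path in the marguerite representation of $R_w(N)$ cuts through the interior; the exterior path additionally crosses (twice each) the split classes of splits that do \emph{not} separate the two leaves, so smoothing to $L_w(R_w(N))$ strictly increases that entry of the distance vector. Concretely, for a cycle split by the attachment points into arcs of total weight $a$ and $b$ with $a+b=z$, the resistance (equivalently, split-metric) contribution is $ab/z$, while the exterior path in $N'$ contributes $ab/z$ plus twice the sum of products of pairs of edge weights internal to the shorter arc. For the same reason your identity $S_w(N')=R_w(N)$ also fails.

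The gap is repairable, and the repair is exactly what the paper does: you do not need equality of the full distance vectors, only equality of the dot products. By Lemma~\ref{cyc}, $\mathbf{x}(N)=\sum_c\mathbf{x}(c)$ over circular orders $c$ consistent with $N$, so $\mathbf{x}(N)\cdot\mathbf{d}$ only reads the entries $d_{ij}$ with $i,j$ adjacent in such an order. For adjacent leaves the shorter arc is a single edge, there are no internal products, and the exterior path is a genuine shortest path crossing exactly the separating split classes; hence those particular entries of $\mathbf{d}_{N'}$ and $\mathbf{d}_N^R$ coincide and $\mathbf{x}(N)\cdot\mathbf{d}_N^R=\mathbf{x}(N)\cdot\mathbf{d}_{N'}$. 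With that weaker substitute for your second claim, the remainder of your argument --- invoking the minimum-path results (Theorems 8, 9 and 11 of the cited source behind Theorem~\ref{newth}) for $\mathbf{d}_{N'}$ and translating $\overline{N'}\le\hat N$ into refinement of $\overline{N}$ --- proceeds as in the paper.
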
 
\begin{proof}
We claim that $\mathbf{x}(N)\cdot\mathbf{d}_N^R$ is the same as $\mathbf{x}(N)\cdot\mathbf{d}_N'$ for $N'=L_w(R_w(N))$. That is because the leaves which are adjacent in some circular order consistent with $N$ and thus in $R_w(N)$ have distance between them which is the sum of the splits that separate them. Since those leaves are adjacent, the shortest path of splits between them will lie on the exterior of $R_w(N)$. In fact, for adjacent $i,j$ an edge of a cycle on the path between them with weight $a$, contributes $ \frac{a(b+c+d+...)}{a+b+c+d+...}$ to $\mathbf{d}_N^R(i,j)$, where the other edges of that cycle have weights $b, c, d, ....$ Bridges $e$ between them contribute their weights $w(e)$. These values are the same as those for the splits displayed between $i,j$, seen in the proof of Theorem~\ref{tsplits}. Therefore:
\begin{align*}\mathbf{x}(N)\cdot\mathbf{d}_N^R &=\sum_c\mathbf{x}(c)\cdot \mathbf{d}_n^R\\
&=\sum_c\mathbf{x}(c)\cdot \mathbf{d}_s, \text{ for } s=R_w(N)\\
&=\sum_c\mathbf{x}(c)\cdot \mathbf{d}_{N'} \text{ for } N'=L_w(R_w(N))\\
&=\mathbf{x}(N)\cdot\mathbf{d}_{N'}
\end{align*}

 We know from Theorems 8, 9, and 11 of \cite{durell} that for any weighted 1-nested phylogenetic network ${M}$ with $n$ leaves, the product $\mathbf{x}(\hat{M})\cdot \mathbf{d}_{{M}}$ is minimized over BME($n,k$) precisely for binary networks $\hat{M}$ with $k$ bridges such that $\overline{M} \le \hat{M}.$ 
 
Thus in our case we have $\mathbf{x}(\hat{N})\cdot \mathbf{d}_{{N'}}$ is minimized over BME($n,k$)  precisely for the unweighted binary networks $\hat{N}$ with $k$ bridges such that $\overline{{N'}} \le \hat{N}$. Here, $\overline{(N')} = \overline{N},$ since $\overline{L_w(R_w(N))} = \overline{N}.$ 
The inequality here is refinement.
\end{proof}


For example compare Figures~\ref{s795} and~\ref{srx}. It is easily checked that although $\mathbf{d}_{{N'}} \ne \mathbf{d}_N^R$, we have  $\mathbf{x}({N})\cdot \mathbf{d}_{{N'}} = \mathbf{x}(N)\cdot \mathbf{d}_N^R = 51.4$

The implication then is that using either linear programming on BME($n,0$) or neighbor-net, assuming that the resistance metric is valid, the resulting split network gives the true exterior form of the original 1-nested phyologenetic network.

\section{2-nested networks, Counterexamples and Conjectures}\label{counter}

 In this section, we examine functions between 1-nested and 2-nested networks, and circular split networks.  We point out how well the various distance measurement distinguish or do not distinguish between network types, via examples. Then we make some conjectures based on observations. 
 
 \subsection{2-nested networks}\label{too}
Towards the end of \cite{gambette-huber}, the authors ask: is it possible to characterize split systems induced by more complex uprooted networks such as 2-nested networks (i.e., networks obtained from 1-nested networks by adding a chord to a cycle)?  At first we interpret this question to be about the result of applying $S_w.$ That is, we specialize the question to asking more specifically which kinds of split systems correspond to  2-nested networks, via assigning them a weighting, finding the minimum path distance, and then finding the unique corresponding circular split network?  The question is still open, but we begin by carefully defining 2-nested networks and making some initial observations.

\begin{definition} 
For $N$ an unrooted phylogenetic network, if every edge of $N$ is part of at most two cycles, we call it a 2-nested network. By this definition, 2-nested networks contain 1-nested networks as a subset, which in turn contain 0-nested networks, which are phylogenetic trees.  By strict $k$-nested networks we mean $k$-nested but not $(k-1)$-nested. We will add the extra descriptor of triangle-free-ness explicitly when desired.
\end{definition}

A weighted 2-nested network is shown in Figure~\ref{twoo}, with its minimum path distance vector. 
\begin{figure}[h]
\centering
\includegraphics{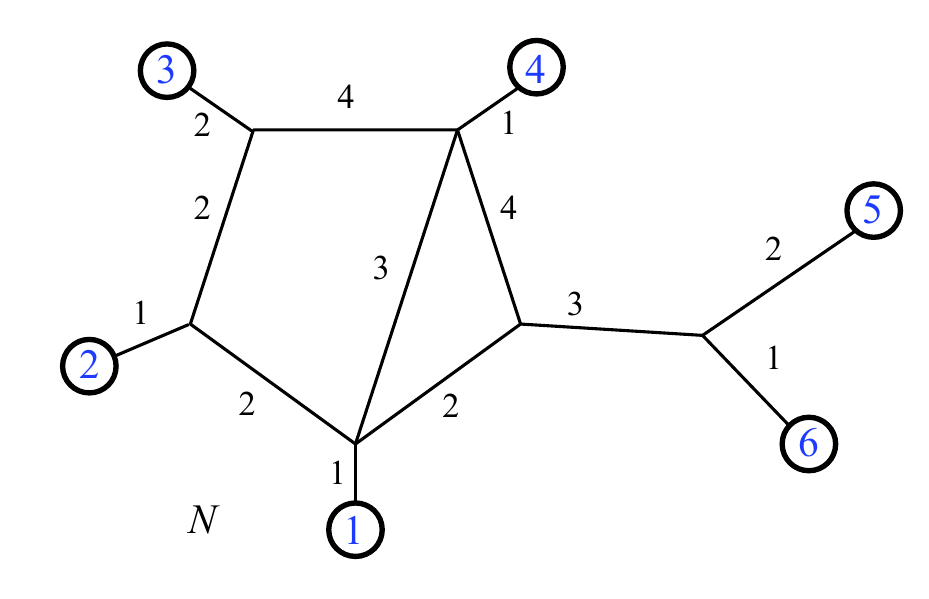} 
\caption{The minimum path distance vector for the weighted 2-nested network $N$  is 
$\mathbf{d}_N = ( 4, 7 , 5 , 8 , 7 , 5 , 7 , 10 , 9 , 7 , 13 , 12 , 10 , 9 , 3 ) $ .
Note that $\mathbf{d}_{14} = 5$, for example, referring to the shortest distance between leaves 1 and 4.}\label{twoo}
\end{figure}

\noindent
The first case we note is that weighted 2-nested networks often have images under $S_w$ that are not outer-path circular split networks. For instance see Figure~\ref{shortynum}. Therefore, by Lemma~\ref{outer},  2-nested networks can lead to split networks distinct from those induced via $S_w$ from 1-nested networks.   Also, applying $S_w$ and then $L_w$ in sequence will produce a weighted 1-nested network that has a different distance vector than the original. 

\begin{figure}
    \centering
    \includegraphics{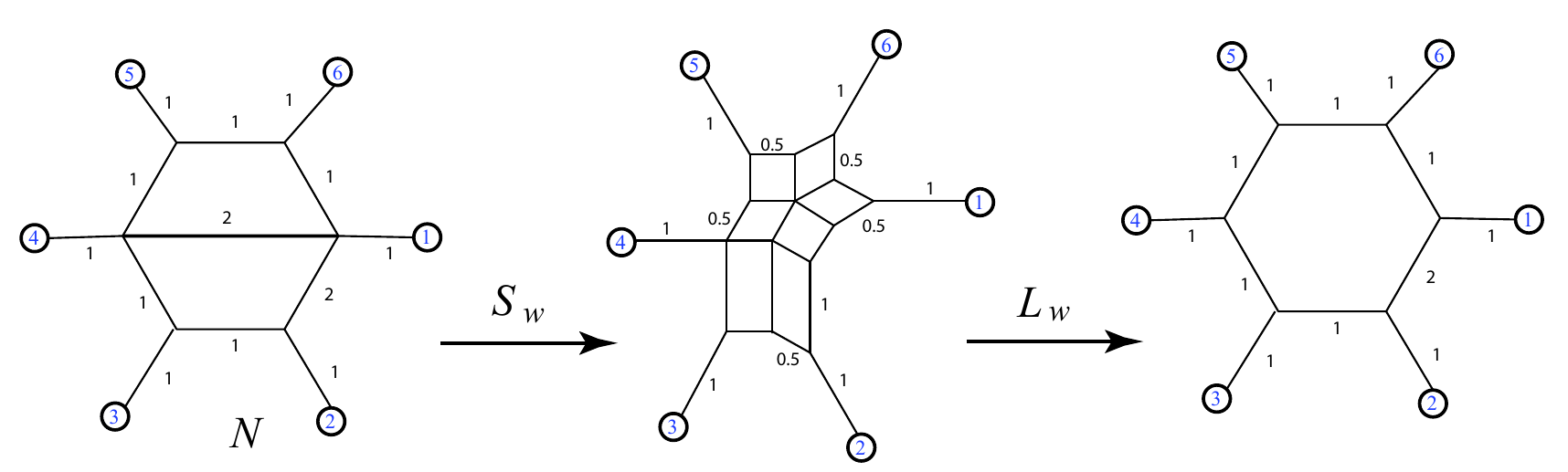}
    \caption{Here the output of $S_w(N)$ is a non-outer-path circular split network, and its image under $L_w$ has a distance vector that does  not match the original: for instance $\mathbf{d}_N(1,4) = 4$ but the distance from 1 to 4 in $L_w(S_w(N))$ is 5.}
    \label{shortynum}
\end{figure}

However, not all weighted 2-nested networks lead to distinct images from the 1-nested networks, under $S_w.$ In fact we have the following:
\begin{thm}For every weighted 1-nested network M, there exists some (not unique) weighted 2-nested network N such that the minimum path distance vectors coincide: $\mathbf{d}_M = \mathbf{d}_N$.\end{thm}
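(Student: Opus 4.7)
The plan is to give an explicit construction, exploiting the set-theoretic containment of 1-nested networks in 2-nested networks together with an ``inert chord'' trick. First I would note that every 1-nested network is automatically 2-nested (``each edge in at most one cycle'' implies ``each edge in at most two cycles''), so the trivial choice $N = M$ already suffices for the existence claim. The point of the parenthetical ``not unique'' is that one can in fact produce genuinely strictly 2-nested $N$'s with the same distance vector, and I would substantiate this by exhibiting one.

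The main construction I would use: assume first that $M$ contains at least one cycle $C$, which is necessarily of length $\geq 4$ by triangle-freeness of 1-nested networks. Pick any two non-adjacent vertices $u,v$ of $C$ and adjoin a new chord edge $\hat e = uv$, assigning it a weight $W$ strictly greater than $\max_{i\ne j}\mathbf{d}_M(i,j)$. Call the result $N$. I would then verify: (i) $N$ is still a valid simple connected phylogenetic network, since leaves are untouched and $u,v$ each gain one in degree (keeping degree $\geq 3$); (ii) $N$ is 2-nested, by cycle-membership accounting: the chord $\hat e$ together with the two arcs of $C$ produces exactly two sub-cycles $C_1, C_2$, so the chord itself lies in precisely $C_1$ and $C_2$, each original edge of $C$ lies in precisely $C$ and one of $C_1, C_2$, and every other edge of $M$ retains its at-most-one-cycle status; (iii) $\mathbf{d}_N = \mathbf{d}_M$, because $W$ exceeds the diameter of $M$, so no minimum path between leaves in $N$ can afford to traverse $\hat e$, and every path avoiding $\hat e$ was already available in $M$ with the same weight.

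If $M$ has no cycles (is a tree), the chord construction is unavailable, and I would either keep the trivial $N=M$ or, for strictness, replace an interior edge $uv$ of weight $w$ with a theta-style gadget: delete $uv$, add two new unlabeled vertices $x, y$, and attach the five edges $ux, xv, uy, yv, xy$ with weights $w/2, w/2, w/2, w/2, W$ for any $W \geq w$. The gadget has exactly three cycles $(uxvy,\,uxy,\,vxy)$, every one of its edges belongs to exactly two of them, and the shortest $u$-$v$ route through it has length $w$ (via $u\!-\!x\!-\!v$ or $u\!-\!y\!-\!v$), so all leaf-to-leaf distances are preserved. The main (and only) obstacle to the proof is the cycle-bookkeeping that certifies the 2-nested property after chord or gadget insertion; distance preservation is immediate from the choice of $W$.
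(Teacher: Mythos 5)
Your proposal is correct and matches the paper's own argument: the paper likewise takes a 2-nested $N$ with the same exterior subgraph as $M$ and assigns the internal chord a weight large enough that it is never used on a minimum path, so the distance vectors coincide. Your version is somewhat more explicit (the quantitative bound $W>\max_{i\ne j}\mathbf{d}_M(i,j)$, the cycle-membership bookkeeping, and the separate treatment of the tree case), but the underlying idea is the same.
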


\begin{proof}
Consider a 1-nested network $M$ with positive values for its edges and a 2-nested network $N$ that has the same exterior subgraph. Let $N$ also have the same positive values for its exterior edges, but a positive value for its internal chord large enough such that on paths of least distance the internal chord of the 2-nested network is never used. Therefore both networks will have the same distance vector $\mathbf{d}_M = \mathbf{d}_N$.
\end{proof}


\subsubsection{Counting 2-nested networks}
We begin counting the total number of unweighted binary, triangle free, 2-nested networks. The numbers of unweighted binary, triangle free, 2-nested networks exist with $n$ leaves are: 6, 120, 2790 for $n=4,5,6.$  
    
First, consider structures with 4 leaves ($n=4$). We start by considering the unlabeled pictures, and then count the ways to assign the values $1,\dots,4$ to the leaves. In fact, we can simplify further by finding the unlabelled 1-nested networks and showing the potential locations of chords simultaneously in each picture. There is one such  unlabeled picture for $n=4$ as shown in Figure~\ref{five}, with two possible internal chords. There are $\frac{3!}{2}$ ways to arrange the leaves before choosing a chord. Therefore, the total number of unweighted binary triangle-free 2-nested networks with $n=4$ leaves is $(2)\frac{3!}{2} = 6$.


\begin{figure}[h]
\centering
\includegraphics[scale=1]{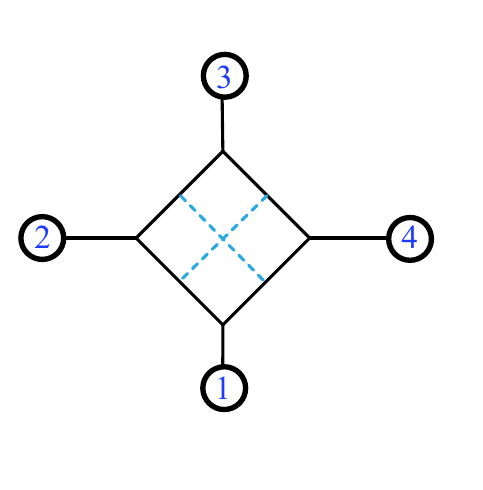} 
\includegraphics[scale=1]{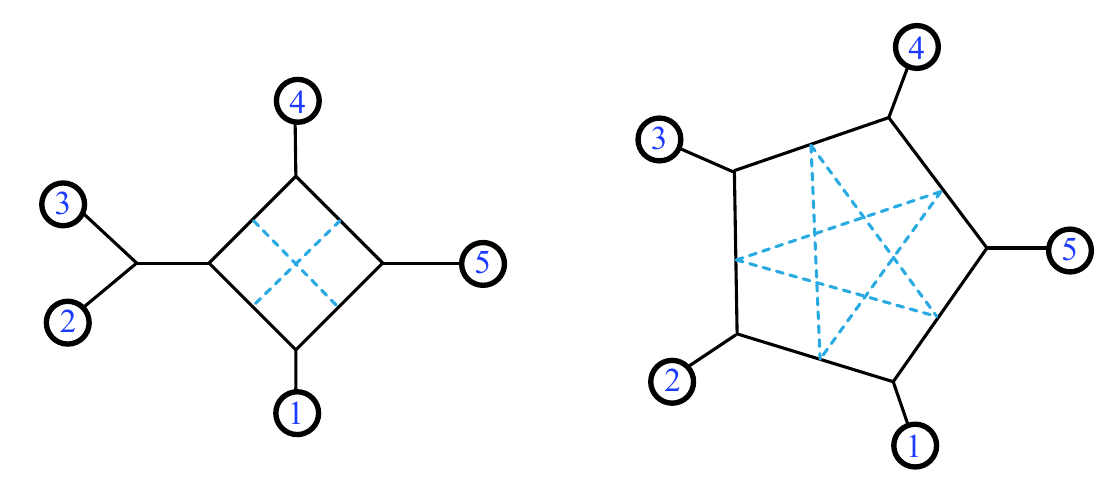} 
\includegraphics[scale=.75]{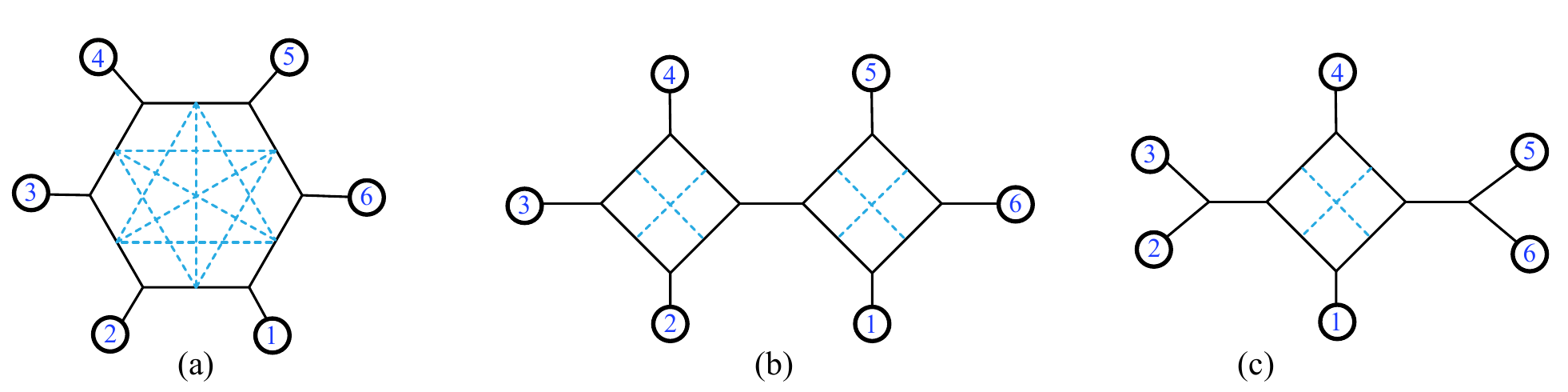} 
\includegraphics[scale=.75]{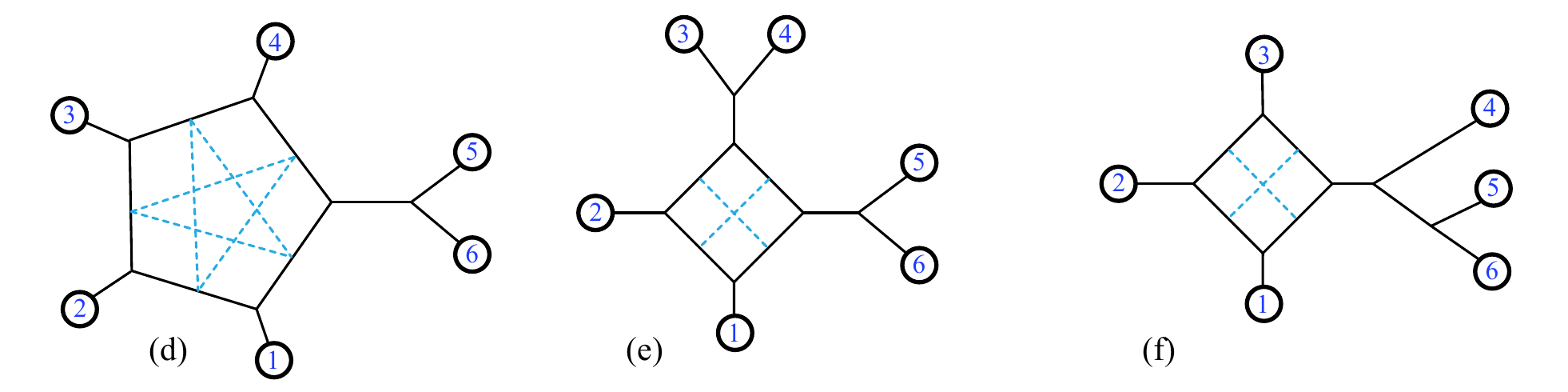}
\caption{For $n=4$, there is only one exterior structure  with two internal  chords possible (as seen by the dotted lines above). For $n=5$, there exist two exterior structures. For $n=6$ there are 6 such structures, labelled a-f.}\label{five}
\end{figure}

For $n=5$ the possible internal structures are shown in Figure~\ref{five}. There are 5 possible internal chords for one structure, and 2 possible internal chords for the other. The number of ways to arrange the leaves of the first structure is $n!$, and the second structure is $(n-1)!$ (since the first is not rotationally symmetric.) However, rearranging the leaves clockwise and counterclockwise yield the same rearrangement, so we must then divide by 2 to eliminate half of the arrangements garnered from the counting of those leaves. Finally, if there were a bridge connecting any components of the structure, simply divide by 2 for the twisting around that bridge. 
\noindent The counting for each $n=5$ structure in Figure~\ref{five} is as follows:
$$\frac{5(2)}{2}\frac{4!}{2} = 60,$$
$$\frac{4(1)}{2}\frac{5!}{2}\frac{1}{2} = 60.$$
The total number of networks for $n=5$ is = 60 + 60 =120.\newline

\noindent For $n=6$ the counting for each structure is as follows (from a to f as pictured in Figure~\ref{five}):
\begin{center}
\begin{tabular}{ccc}
(a)\vspace{.1in}&&$\frac{6(3)}{2}\frac{5!}{2} = 540,$\\

(b)\vspace{.1in}&&$(2)(2)\frac{4(1)}{2}\frac{6!}{2}\frac{1}{2}\frac{1}{2} = 720,$\\

(c)\vspace{.1in}&&$\frac{4(1)}{2}\frac{6!}{2}\frac{1}{4}\frac{1}{2} = 90,$\\

(d\vspace{.1in})&&$\frac{5(2)}{2}\frac{6!}{2}\frac{1}{2} = 900,$\\

(e)\vspace{.1in}&&$\frac{4(1)}{2}\frac{6!}{2}\frac{1}{4}\frac{1}{2} = 180,$\\

(f)&&$\frac{4(1)}{2}(6!)\frac{1}{4}\ = 360.$\\
\end{tabular}
\end{center}
The total number of networks for $n=6$ is = 540 + 720 + 90 + 900 + 180 + 360 = 2790.
Notice for (f), reading the labels clockwise is not equivalent to reading them counterclockwise due to the tree structures. This means we just consider $6!$ and not $\frac{6!}{2}$. We ask whether there is a general formula for the number of binary triangle-free 2-nested networks with $n$ leaves. Aternatively, we might look for a 2-variable formula. In \cite{durell} there is a 2-variable formula for  binary triangle-free 1-nested networks with $n$ leaves and $k$ non-trivial bridges, which may serve as a model: $$ 
{n-3 \choose k}\frac{(n+k-1)!}{(2k+2)!!}.
$$ 

 \subsection{Indistinguishable weightings} Resistance distance metrics on a 1-nested phylogenetic network are not in bijection with edge weightings, but the  split-equivalence class is an invariant of those edge weights. That is, if two networks $N$ and $N'$ have the same resistance distance metric $\mathbf{d}^R_N = \mathbf{d}^R_{N'}$, this does not imply that $N=N'$, but it does imply that $\overline{N}=\overline{N'}.$ The latter fact is implied by Lemma~\ref{kalm} and the theorems of Section~\ref{thms}, and we can see the former fact via counterexample. In Figure~\ref{ex23} we show two weighted phylogenetic networks with 4 leaves, called $N$ and $N'.$ Their resistance distances between leaves are identical: $$\mathbf{d}_N^R = \mathbf{d}_{N'}^R = \left(\frac{122}{23},\frac{178}{23},\frac{108}{23},\frac{198}{23},\frac{168}{23},\frac{176}{23}\right).$$
Note that we do see that $\overline{N}=\overline{N'}.$ There are 7 split-classes of 1-nested phylogenetic networks on 4 leaves, and our theorems  show that none of the other 6 classes can be given edge weights that yield this same resistance distance metric on four leaves.

\begin{figure}
    \centering
    \includegraphics[width = 4.5in]{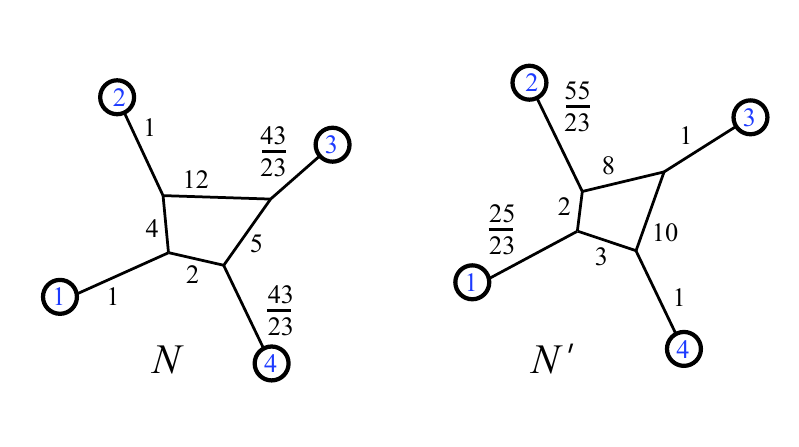}
    \caption{Two weighted phylogenetic networks with identical resistance distances for their leaves.}
    \label{ex23}
\end{figure}

\subsection{Non-Kalmanson networks} Not all resistance distances are Kalmanson, even when restricted to phylogenetic networks. For a counterexample, consider the network $N$ formed by having 6 leaves attached to the 6 vertices of the complete bipartite graph $K_{3,3},$ pictured in Figure~\ref{bip}.
\begin{figure}
    \centering
    \includegraphics[width = 2.25in]{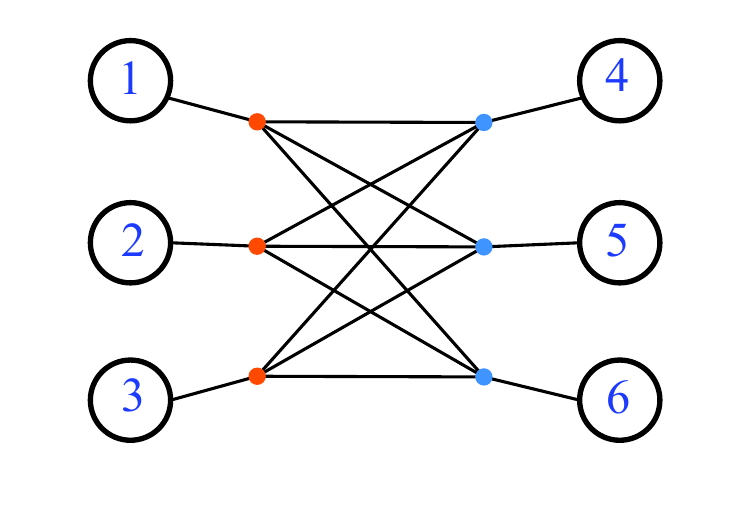}
    \caption{A phylogenetic network with non-Kalmanson resistance distance. All the edge lengths are 1.}
    \label{bip}
\end{figure}
The resistance distance metric for complete bipartite graphs is found in \cite{klein93}. Consider that $K_{m,n}$ is the graph join if two edgeless graphs: $K_{m,n} = \overline{K}_m + \overline{K}_n$ with unit weight for each edge. Then the resistance distance on $K_{m,n}$ is $2/n$ for vertices that have no edge between them (they are both the same color), and $(m+n-1)/mn$ for vertices with an edge between them \cite{klein93}. For our example $N$, let the two (same-colored) parts of the graph (3 nodes each, say red and blue) be attached to the leaves $\{1,2,3\}$ and $\{4,5,6\}$ respectively. Letting each edge have weight 1, we find the resistance distance between any two leaves attached to the same colored part is $2+2/3 = 8/3$, while the distance between any two leaves, with one attached to each part, is $2+5/9 = 23/9.$ In any circular order of the leaves, there will be a sub-sequence $i,j,k,l$ where the first two leaves $i,j$ are attached to the same color, and the second two $k,l$ are both attached to the other color. Thus $\mathbf{d}_N^R(i,j)+\mathbf{d}_N^R(k,l) = 16/3 = 48/9$ which is larger than $\mathbf{d}_N^R(i,k)+\mathbf{d}_N^R(j,l) = 46/9$.  
This counterexample raises the question of necessary conditions for a network with resistance distance to be Kalmanson.  

\subsection{Outer Planarity}\label{outer} We conjecture that outer planarity is a sufficient condition for Kalmanson: that if a weighted phylogenetic network can be drawn in the plane with its leaves on the exterior that the resistance distance is Kalmanson. We note that it this condition is not necessary: it can be checked that the complete graph $K_5$ with unit edges has the Kalmanson property.

\subsection{Faithfully phylogenetic Kalmanson distance vectors}\label{faith} Following the terminology in Definition~\ref{sig}, we call a Kalmanson distance vector $\mathbf{d}$ \emph{faithfully phylogenetic} if the unique  circular split network associated to $\mathbf{d}$  is in the range of  $\Sigma$  (after forgetting weights). We conjecture that faithfully phylogenetic Kalmanson distance vectors always arise from resistance distances. Specifically we conjecture that if $\mathbf{d}$ is faithfully phylogenetic, then $\mathbf{d} = \mathbf{d}_N^R$ for some weighted phylogenetic network $N.$
Note that not all Kalmanson distance vectors arise from resistance distances, simply due to the fact that not all circular split networks are in the range of $\Sigma$.

\subsection{2-nested Kalmanson networks} A special case of~\ref{faith} is the  conjecture that 2-nested phylogenetic networks have Kalmanson resistance distance.  For instance in Figure~\ref{level2resist} we show a simple 2-nested network $N$ whose resistance distance is clearly Kalmanson: in fact it is the same resistance distance as possessed by the shown 1-nested network. 

\subsection{Indistinguishable weightings and invariants} We conjecture that for every weighted 2-nested network there is a weighted 1-nested network with matching resistance distance. Again see Figure~\ref{level2resist}. However, in light of the above conjecture~\ref{outer}, we conjecture that the exterior shape of networks is an invariant of resistance distance: specifically that if any two outer planar networks $N,N'$ have $\mathbf{d}_N^R = \mathbf{d}_{N'}^R$  then $L(\overline{R_w(N)})=L(\overline{R_w(N')}).$

\begin{figure}
    \centering
    \includegraphics[width = \textwidth]{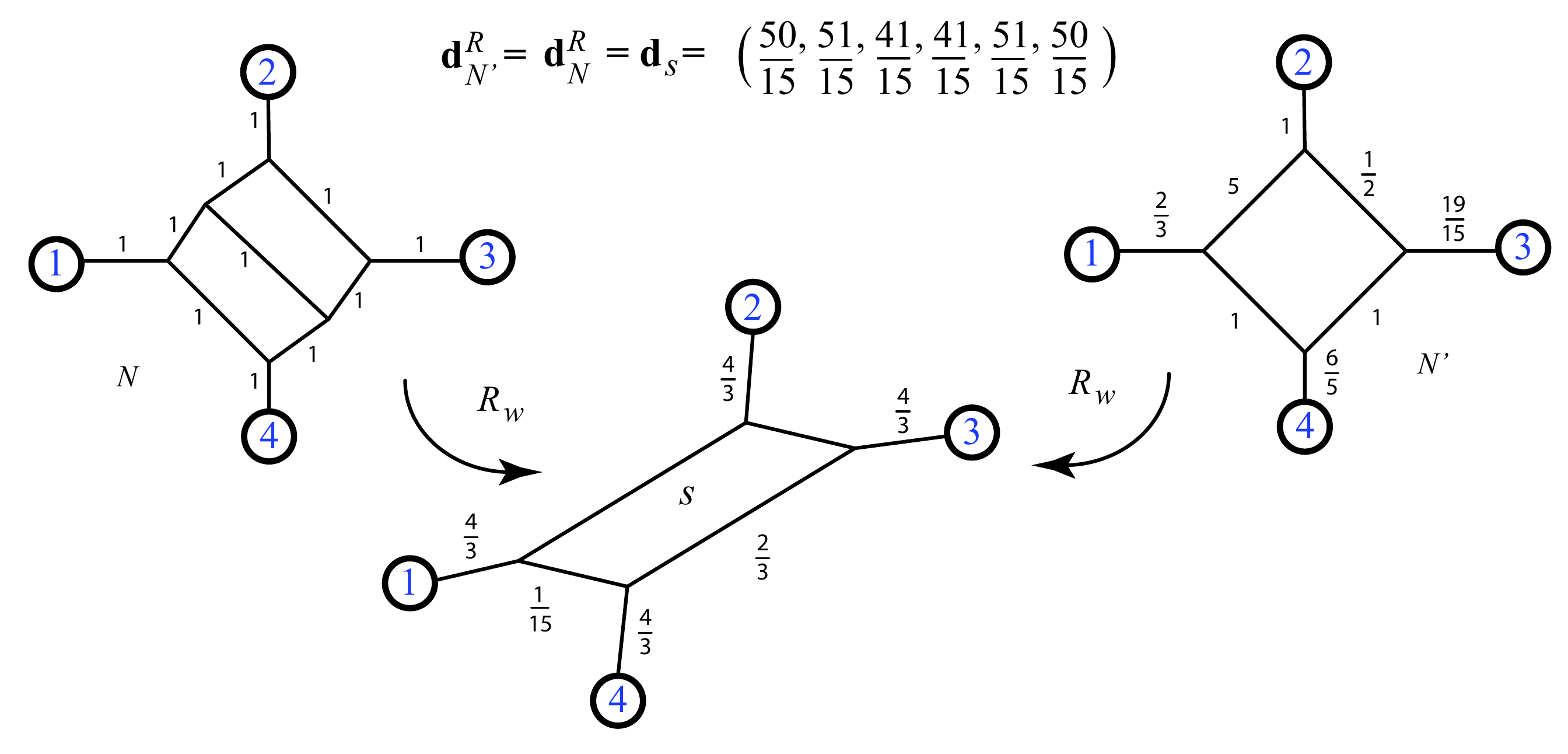}
    \caption{Two weighted phylogenetic networks with identical resistance distances for their leaves, and their common split network.}
    \label{level2resist}
\end{figure}

\subsection{Limiting case} Consider when an edge in a cycle of $N$ has a very large weight, or high resistance. As this weight grows, the limit of $L_w(R_w(N))$ approaches a network with that edge being deleted entirely. We see this by considering any two circuit-parallel paths with resistance $R_1$ and $R_2$ the first of which uses an edge with variable weight $w$ (all other weights constant). Then letting $w\to\infty$ implies $R_1\to\infty$ and thus $R_1R_2/(R_1 +R_2)$ approaches $R_2$ by L'Hospital's rule. Thus as $w$ goes to $\infty$ we see that the resistance distances using those circuit-parallel paths reduce to the path distances,  and  so the distance metric from that network approaches one without that edge. This is similar to the way in which $S_w$, which uses the minimal path distance on $N$, serves to delete some edges as seen in Figure~\ref{s795}.

\section{Distance measures}\label{jukes}
A question is raised about the mathematics which precedes the work described in this paper: what sort of measurement should actually yield the experimental resistance distances in a real example? What should play the role of attaching the ohmmeter to pairs of wires? Usually, DNA sequences of length $m$ are aligned (a multi-step problem of its own) and then the number of disagreeing sites is counted. Let $p$ be the proportion of disagreements to the length $m$ of the sequence: $p=(m-c)/m$ where $c$ is the number of correct, matching sites. Then there is a selection of mutation models, such as the simplest Jukes-Cantor model, which predict a distance $D$ which is the expected total number of mutations.  Experimentally we find that distance $D$ as a function of the observed disagreements.  Alternately we could choose $D$ from the list of evolutionary models: for instance $${\displaystyle D=K=-{1 \over 2}\ln((1-2p-q){\sqrt {1-2q}})}$$ for Kimura's two parameter model. Or, alignment-free models such as the $k$-mer  distance measures as described in \cite{seth}.

Here, we would want a distance $D=R$ which is summed when in sequence but obeys the Ohm equations. The answer will depend both on the model of mutation we choose and the model of recombination we choose. For instance, 
$\displaystyle{D=-{3 \over 4}\ln({1-{4 \over 3}p})}$  for the Jukes-Cantor model, as described in \cite{jukes}.
Rewriting using  $p=(m-c)/c$ we have:
$$
D(c)=\frac{3}{4}\ln\left(\frac{3m}{4c-m}\right).
$$
 
\begin{figure}
    \centering
    \includegraphics{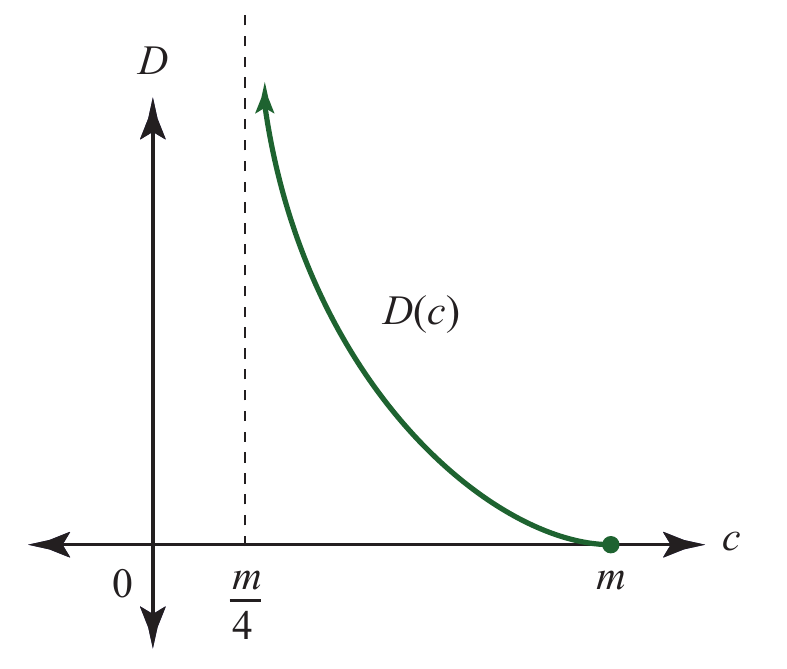}
    \caption{Calculated Jukes-Cantor distance $D$ as a function of the number of matching sites $c$ in aligned sequences of length $m$.}
    \label{Dgraph}
\end{figure}

$D$ has the graph in Figure~\ref{Dgraph}. The $c$-axis is explained by the fact that in the Jukes-Cantor model, mutations of the 4 nucleotides $A,G,T,C$ can replace any letter with another---including a self replacement. This implies that the smallest number of matching sites is $\frac{m}{4}$, while the largest is $m.$ We can use $D$ for the resistance distance only if there is experimental evidence that for circuit-parallel paths we have $D={D_1D_2}/({D_1+D_2}),$  where $D_1(c_1)$ and $D_2(c_2)$ are the distances for each path, in expected numbers of mutations as a function of correct matching sites. There are certainly some features of $D$ that look promising, including the shape of its graph: resistance typically ranges from 0 to infinity. Assuming that the formula for $D$ over the circuit-parallel paths does hold, when one  of the circuit-parallel resistances is infinite: say $D_1 \to \infty$; then we see that $D \to D_2$. Similarly, as  $c_1\to m/4$, we have that $c,$ the number of correct sites after recombination, approaches $c_2.$ 
 
 When both branches have the same distance $D_1=D_2$, and it obeys Ohm's law, we see the total resistance $D=D_1/2.$ Using the formula for $D(c)$ and $D_1(c_1)$  and solving for $c$ we get the following function, graphed in Figure~\ref{cgraph}: $$c = \frac{m}{4}+\sqrt{3\left(\frac{m}{4}c_1-\left(\frac{m}{4}\right)^2\right)}.$$ Thus as a first check the geneticist could compare two genomes and their hybrid genome with a common ancestor. When the two are close to the same distance from the common ancestor (both have $c_1$ matching sites), then the pair $(c_1,c)$  for $c$ the number of matches between the hybrid and the common ancestor might fit the parabola as seen in Figure~\ref{cgraph}. If that fit is achieved, then it would be reasonable to apply the theorems of this paper.

\begin{figure}
    \centering
    \includegraphics{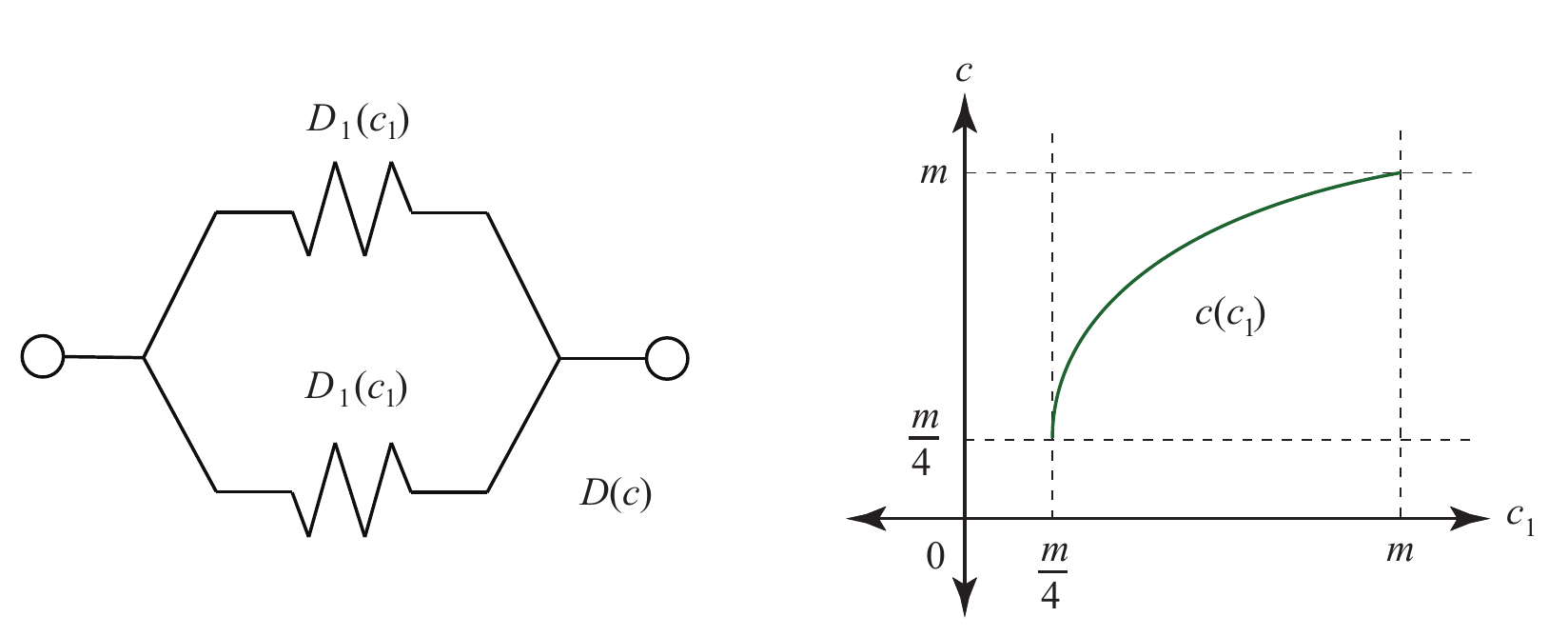}
    \caption{On the left is a simple parallel circuit with identical resistance on each branch. If the resistance is the Jukes-Cantor distance and obeys the Ohm laws, then the number $c$ of matching sites at the end of the circuit will depend on the number $c_1$ of correct matching sites at the end of each branch before recombination.}
    \label{cgraph}
\end{figure}

\section{Acknowledgements}

This manuscript has been released as a pre-print at arxiv.org/abs/2007.13574, \cite{pre}. We are thankful for proofreading by our referees, and for conversations with Jim Stasheff and Robert Kotiuga.

\bibliographystyle{amsplain}
\bibliography{phyloresist}{}

\providecommand{\bysame}{\leavevmode\hbox to3em{\hrulefill}\thinspace}
\providecommand{\MR}{\relax\ifhmode\unskip\space\fi MR }
\providecommand{\MRhref}[2]{%
  \href{http://www.ams.org/mathscinet-getitem?mr=#1}{#2}
}
\providecommand{\href}[2]{#2}
\begin{thebibliography}{10}

\bibitem{seth}
Elizabeth~S. Allman, John~A. Rhodes, and Seth Sullivant, \emph{Statistically
  consistent {$k$}-mer methods for phylogenetic tree reconstruction}, J.
  Comput. Biol. \textbf{24} (2017), no.~2, 153--171. \MR{3607847}

\bibitem{bapat04}
R.~B. Bapat, \emph{Resistance matrix of a weighted graph}, Communications in
  Mathematical and in Computer Chemistry MATCH \textbf{50} (2004), 73--82.

\bibitem{Bryant2007}
David Bryant, Vincent Moulton, and Andreas Spillner, \emph{Consistency of the
  neighbor-net algorithm}, Algorithms for Molecular Biology \textbf{2} (2007),
  no.~1, 8.

\bibitem{curtis2}
E.~B. Curtis, D.~Ingerman, and J.~A. Morrow, \emph{Circular planar graphs and
  resistor networks}, Linear Algebra Appl. \textbf{283} (1998), no.~1-3,
  115--150. \MR{1657214}

\bibitem{curtis0}
Edward~B. Curtis and James~A. Morrow, \emph{Determining the resistors in a
  network}, SIAM J. Appl. Math. \textbf{50} (1990), no.~3, 918--930.
  \MR{1050922}

\bibitem{curtis1}
\bysame, \emph{The {D}irichlet to {N}eumann map for a resistor network}, SIAM
  J. Appl. Math. \textbf{51} (1991), no.~4, 1011--1029. \MR{1117430}

\bibitem{basic}
Andreas Dress, Katharina~T. Huber, Jacobus Koolen, Vincent Moulton, and Andreas
  Spillner, \emph{Basic phylogenetic combinatorics}, Cambridge University
  Press, Cambridge, 2012. \MR{2893879}

\bibitem{durell}
Cassandra Durell and Stefan Forcey, \emph{Level-1 phylogenetic networks and
  their balanced minimum evolution polytopes}, J. Math. Biol. \textbf{80}
  (2020), no.~5, 1235--1263. \MR{4071414}

\bibitem{filar}
Vladimir Ejov, Jerzy~A Filar, Michael Haythorpe, John~F Roddick, and Serguei
  Rossomakhine, \emph{A note on using the resistance-distance matrix to solve
  hamiltonian cycle problem}, arxiv:1902.10356 (2019).

\bibitem{primer}
M.~Ern\'{e}, J.~Koslowski, A.~Melton, and G.~E. Strecker, \emph{A primer on
  {G}alois connections}, Papers on general topology and applications
  ({M}adison, {WI}, 1991), Ann. New York Acad. Sci., vol. 704, New York Acad.
  Sci., New York, 1993, pp.~103--125. \MR{1277847}

\bibitem{scalzo}
Stefan Forcey and Drew Scalzo, \emph{Galois connections for phylogenetic
  networks and their polytopes}, Journal of Algebraic Combinatorics;
  arXiv/abs/2004.11944 (2020).

\bibitem{pre}
\bysame, \emph{Phylogenetic networks as circuits with resistance distance,
  preprint}, arxiv.org/abs/2007.13574 (2020).

\bibitem{gambette-huber}
P.~Gambette, K.~T. Huber, and G.~E. Scholz, \emph{Uprooted phylogenetic
  networks}, Bull. Math. Biol. \textbf{79} (2017), no.~9, 2022--2048.
  \MR{3685182}

\bibitem{Huson1998}
Daniel~H. Huson, \emph{Splits-tree: analyzing and visualizing evolutionary
  data}, Bioinformatics \textbf{14 1} (1998), 68--73.

\bibitem{jukes}
T.H. Jukes and C.R. Cantor, \emph{Evolution of protein molecules}, Mammalian
  protein metabolism (H.~N. Munro, ed.), Academic Press, 1969, pp.~21--132.

\bibitem{klein93}
D.~J. Klein and M.~Randi\'{c}, \emph{Resistance distance}, J. Math. Chem.
  \textbf{12} (1993), no.~1-4, 81--95. \MR{1219566}

\bibitem{Pachter2}
D.~Levy and Lior Pachter, \emph{The neighbor-net algorithm}, Advances in
  Applied Mathematics \textbf{47} (2011), 240--258.

\bibitem{Steel}
Charles Semple and Mike Steel, \emph{Cyclic permutations and evolutionary
  trees}, Adv. in Appl. Math. \textbf{32} (2004), no.~4, 669--680. \MR{2053839
  (2005g:05042)}

\bibitem{steelphyl}
Mike Steel, \emph{Phylogeny---discrete and random processes in evolution},
  CBMS-NSF Regional Conference Series in Applied Mathematics, vol.~89, Society
  for Industrial and Applied Mathematics (SIAM), Philadelphia, PA, 2016.
  \MR{3601108}

\bibitem{klein15}
Yujun Yang and Douglas~J. Klein, \emph{Resistance distance-based graph
  invariants of subdivisions and triangulations of graphs}, Discrete Appl.
  Math. \textbf{181} (2015), 260--274. \MR{3284531}

\bibitem{klein19}
\bysame, \emph{Two-point resistances and random walks on stellated regular
  graphs}, J. Phys. A \textbf{52} (2019), no.~7, 075201, 18. \MR{3916426}

\end{thebibliography}

\end{document}